\journalname{}
\newcommand{\rank}{\operatorname{rank}}
\newcommand{\abs}[1]{\vert#1\vert}
\renewcommand*{\top}{%
  {\mathpalette\@transpose{}}%
}
\newcommand*{\@transpose}[2]{%
  \scriptsize
  \raisebox{\depth}{$\m@th#1\mathsf{T}$}%
}
\let\c@prop\c@theorem
\let\c@cor\c@theorem
\let\c@lemma\c@theorem
\let\c@definition\c@theorem
\let\c@example\c@theorem
\let\c@remark\c@theorem
\let\c@obs\c@theorem
\let\c@claim\c@theorem
\newcommand{\R}{\mathbb{R}}
\newcommand{\bA}{\mathbf{A}}
\newcommand{\bB}{\mathbf{B}}
\newcommand{\bb}{\mathbf{b}}
\newcommand{\bc}{\mathbf{c}}
\newcommand{\be}{\mathbf{e}}
\newcommand{\bg}{\mathbf{g}}
\newcommand{\bv}{\mathbf{v}}
\newcommand{\bw}{\mathbf{w}}
\newcommand{\bx}{\mathbf{x}}
\newcommand{\by}{\mathbf{y}}
\DeclareMathOperator{\vol}{vol}
\DeclareMathOperator{\convcl}{convcl}
\DeclareMathOperator{\conv}{conv}
\begin{document}

\title{Gaining or Losing Perspective for Convex Multivariate Functions on Box Domains\thanks{This work was supported in part by ONR grant N00014-21-1-2135.
This work is partially based upon work supported by the
National Science Foundation under Grant No. DMS-1929284 while
the authors were in residence at the Institute for Computational and Experimental Research in
Mathematics in Providence, RI, during the Discrete Optimization program.
}
}


\titlerunning{Gaining or Losing Perspective for Convex Multivariate Functions}        

\author{Luze Xu         \and
        Jon Lee 
}


\institute{Luze Xu \at
              Department of Mathematics, University of California at Davis\\
              \email{lzxu@ucdavis.edu}           
         \and
         Corresponding Author: Jon Lee \at
          Department of Industrial and Operations Engineering, University of Michigan\\
          \email{jonxlee@umich.edu}
}

\date{February 8, 2023. Revised, November 8, 2023. Revised March 31, 2024.}

\maketitle

\begin{abstract}

MINLO (mixed-integer nonlinear optimization) formulations of the disjunction between the origin and a polytope via a binary indicator variable is broadly used in nonlinear combinatorial optimization for modeling a fixed cost associated with carrying out a group of activities and a convex cost function associated with the levels of the activities.
The perspective relaxation of such models is often used to solve to global optimality in a branch-and-bound context, but it typically requires suitable conic solvers and is not compatible with general-purpose NLP software in the presence of other classes of constraints.
This motivates the investigation of when simpler but weaker relaxations may be adequate.
Comparing the volume (i.e., Lebesgue measure) of the relaxations as a measure of tightness, we lift some of the results related to the simplex case to the box case.
In order to compare the volumes of different relaxations in the box case, it is necessary to find an appropriate concave upper bound that preserves the convexity and is minimal, which is more difficult than in the simplex case. To address the challenge beyond the simplex case, the triangulation approach is used.

\keywords{mixed-integer nonlinear optimization  \and global optimization \and convex relaxation \and  perspective relaxation \and polytope \and volume \and integration}

\end{abstract}


\section{Introduction}

We consider a common MINLO
(mixed-integer nonlinear optimization) modeling situation: when an
indicator variable $z$ is
equal to 0, a vector $\bx$ of $d$ decision variables is set to $\mathbf{0}\in\mathbb{R}^d$,
and when $z=1$, the vector $\bx$ is required to be in a polytope $Q\subset \mathbb{R}^d$
(see, for example, \cite{gunlind1,lee_gaining_2020} and the references therein).
In applications, when $z=1$, there is typically a fixed cost $c$ incurred and
a variable convex cost $f(\bx)$.
The tightest formulation of a natural disjunctive model uses the well-known
perspective relaxation. But, generally, such a relaxation
is best handled with
conic solvers (e.g., \verb;MOSEK;  \cite{MOSEK} and \verb;SDPT3; \cite{SDPT3}); see \cite[Sec. 1.2]{lee_gaining_2020}.
Even then, such solvers are not generally capable of handling all possible convex $f$
(at present, most can handle exponential functions and convex power functions).
There is significant value in understanding when (i.e., for which $f$ and $Q$) lighter relaxations than the
perspective relaxation can be successfully employed.

\paragraph{Key definitions.}
In what follows, we have a convex function $f:\R^d_{\geq 0}\rightarrow \R$
and a polytope $Q \subset \R^d_{\geq 0}\setminus \{\mathbf{0}\}$.
We define the disjunctive set
\begin{align*}
& D(f,Q) := \left\{(\mathbf{0}_d,0,0)\right\}\cup D_1(f,Q), \mbox{ where} \\
& D_1(f,Q) : = \left\{(\bx,y,1)\in \R^d\times \R \times \{0,1\} ~:~
\mu(\bx) \geq y \geq f(\bx), ~\bx\in Q\right\},
\end{align*}
and $\mu$
is a concave upper bound (function) of $f$ on $Q$.
The disjunction models the choice of either $(\bx,y) =(\mathbf{0}_d,0)$
or $(\bx,y)$ is above the graph of $f$ and below the graph of
$\mu$ on the domain $Q$. The disjunction does this
through the indicator variable $z\in\{0,1\}$.
 Note that when $Q$ is
a $d$-simplex, say $Q:=\conv\{\bv^0,\ldots,\bv^d\}$, then the best choice of $\mu$ is
the unique linear function that passes through the $d+1$ points $(\bv^i,f(\bv^i))$. But in the general case, when $Q$ is given by a linear-inequality description (as is typical),
it may  not be practical to calculate the
best  concave upper bound of $f$ on $Q$ (unless $d$ is quite small).
So understanding how convex relaxations of $D(f,Q)$ depend on the
chosen $\mu$ is quite important.

The \emph{perspective function}, $\tilde{f}:\R^{d+1}\rightarrow \R$, defined by
\[
\tilde{f}(\bx,z):= \left\{
          \begin{array}{ll}
             z f(\bx/z), & \hbox{for $z>0$;} \\
             +\infty, & \hbox{otherwise}
          \end{array}
         \right.
\]
is well-known to be a convex function, when $f:~\R^d \rightarrow \R$ is convex  (see \cite[Sec. IV.2.2, Page 160]{perspecbook}).
If we evaluate the closure of $\tilde{f}$ (whose epigraph is the closure of the epigraph of $\tilde{f}$, see \cite[Sec. IV, Definition 1.2.4, Page 149]{perspecbook})
at $(\mathbf{0}_d,0)$, we get $0$ (see \cite[Sec. IV, Remark 2.2.3, Page 162]{perspecbook}).
So we can define the \emph{perspective relaxation}
\begin{align*}
P(f,Q):=&\convcl \left\{ (\bx,y,z) \in \R^d\times \R \times \R ~:~ \right.\\
&\left. \qquad \tilde{\mu}(\bx,z) \geq y \geq \tilde{f}(\bx,z), ~\bx\in z\cdot Q,~ 1\geq z > 0
\vphantom{\R^d}\right\},
\end{align*}
where the upper bound $\tilde{\mu}(\bx,z)$ is the negative of the perspective function of $-\mu(\bx)$, and is thus concave itself
(and even linear when $\mu$ is), and $\convcl$ denotes convex closure.

Additionally, we consider the following special case: the domain of the convex function $f$ is $\conv(Q\cup \{\mathbf{0}\})=\{z\cdot Q ~:~ 0\le z\le 1\}$, and $f(\mathbf{0})=0$. We can then define the \emph{na\"{i}ve relaxation}:
\begin{align*}
P^0(f,Q):=&\left\{ (\bx,y,z) \in \R^d\times \R \times \R ~:~\right.\\
&\qquad\left.\tilde{\mu}(\bx,z) \geq y \geq f(\bx), ~\bx\in z\cdot Q,~ 1\geq z \geq 0 \right\}.
\end{align*}
Note that $P^0(f,Q)$ is easier to work with than $P(f,Q)$,
computationally, because
it does not employ a perspective function.

Finally, to compare   $P^0:=P^0(f,Q)$ and $P:=P(f,Q)$,
we define the volume (i.e., Lebesgue measure) of a polyhedron $P$ (resp. $P^0$) as $\vol(P)$ (resp. $\vol(P^0)$), the \emph{cut-off amount}  $\Delta(P^0,P)=
\vol(P^0) - \vol(P)$,
and the \emph{cut-off ratio}  $\frac{\Delta(P^0,P)}{\vol(P^0)}$.

\paragraph{Background.} \cite{LM1994} introduced the idea of using $d$-dimensional volume
(i.e., Lesbesgue measure) to compare convex relaxations of nonconvex sets in $\mathbb{R}^d$,
in the context of global optimization for combinatorial-optimixation problems modeled as integer programs;
also see
\cite{LeeSkipperSpeakmanMPB2018}
and the many references therein.
\cite{PerspectiveWCGO,lee_gaining_2020}
applied this idea to the perspective
(convex-hull) and na\"{\i}ve  relaxations of a disjunction related to convex univariate functions (i.e., $d=1$) on an interval domain.
\cite{PLPerspecCTW,lee2020piecewise}
extended the analysis to piecewise-linear underestimation of the convex function.
\cite{xuleemulti} generalized the ideas of \cite{PerspectiveWCGO,lee_gaining_2020} to multivariate convex functions on a simplex domain. Our goal in what
follows is to push the multivariate ideas further, to handle box and some zonotope domains.
We note that such domains are very natural for applications,
and they are precisely the basic domains focused on in
all state-of-the-art software for nonconvex MINLO, which are based on ``spatial branch-and-bound''; see \cite{TawarSahinBook,Baron1996}, for example.

\paragraph{Organization and contributions.}
In Section \ref{sec:basics}, we derive general formulae for the volumes of perspective and na\"{i}ve relaxations which are dependent on the upper bound function $\mu$, and we demonstrate that the  cut-off amount is independent of $\mu$.
We also establish that the volume formulae for homogeneous functions
given by \cite{xuleemulti}
for simplices can be generalized to all polytopes.
In Section \ref{sec:compare}, we derive the formula for the
cut-off amount for two natural families of functions, extending the corresponding result in \cite{xuleemulti} to the box case.
As in \cite{xuleemulti}, we are interested in analyzing the asymptotic behavior of the cut-off ratio  to compare perspective and na\"{i}ve relaxations.
One major difficulty for the analysis of the extension to  boxes is that the best concave  upper bound $\mu(\bx)$ is generally no longer a hyperplane agreeing with $f(x)$ at all the vertices, as in the simplex case.
In Section \ref{sec:cut_off}, we analyze the cut-off ratio for two natural upper bounds: a constant bound and the concave envelope bound (i.e., the smallest concave upper bound), and show that they have similar asymptotic behavior.
In Section \ref{sec:conc}, we make some brief conclusions.

\paragraph{Notation.}
In what follows, we use boldface lower-case for vectors and boldface upper-case for matrices. $\displaystyle\be_n^{d}$ denotes the $n$-th unit vector in $\R^{d}$, and the superscript $d$ is often dropped if the dimension is clear from the context.
$\mathbf{I}_d$ is the order-$d$ identity matrix.
$Q_u:=\bv_0+u[0,1]^d$ $(\bv_0>0, u>0)$ denotes the box obtained from a unit hypercube scaled by $u$ and translated by $\bv_0$.

\section{Basics}\label{sec:basics}
Generalizing from the simplex case \cite[Section 2]{xuleemulti},
we have the following formulae for the
volume of $P(f,Q)$ and $P^0(f,Q)$.
\begin{theorem}\label{thm:perspecvol}
    Suppose that $f$ is a continuous and convex function on the polytope $Q \subset \R^d_{\geq 0}\setminus \{ \mathbf{0}\}$. Then
    $$
    \vol(P(f,Q)) = \frac{1}{d+2}\int_{Q}\mu(\bx) d\bx - \frac{1}{d+2}\int_{Q} f(\bx)d\bx .
    $$
\end{theorem}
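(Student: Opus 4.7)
The plan is to show that, modulo adding a single limit point at the origin, the set inside $\convcl\{\cdots\}$ is already convex, and then compute its volume by Fubini-type slicing in the $z$-coordinate. Since $\tilde f$ is convex on $\{z>0\}$ and $\tilde\mu$ is concave there, and $\bx\in z\cdot Q$ translates (for a linear description $A\bx\leq \bb$ of $Q$) into the jointly linear constraints $A\bx\leq z\bb$, the raw set
\[
S:=\{(\bx,y,z)\in\R^{d+2}~:~\tilde\mu(\bx,z)\geq y\geq\tilde f(\bx,z),~\bx\in z\cdot Q,~1\geq z>0\}
\]
is already convex. Its convex closure differs from $S$ only by the single point $(\mathbf 0_d,0,0)$ (obtained by taking the closure of the epigraph of $\tilde f$ at $z=0$, which equals $0$ by the remark cited from \cite{perspecbook}), so $\vol(P(f,Q))=\vol(S)$.

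Next I would slice $S$ at a fixed level $z\in(0,1]$. The slice $S_z:=\{(\bx,y):(\bx,y,z)\in S\}$ has $(d+1)$-dimensional Lebesgue measure
\[
V(z)=\int_{z\cdot Q}\bigl(\tilde\mu(\bx,z)-\tilde f(\bx,z)\bigr)\,d\bx.
\]
Now apply the change of variables $\bx=z\bw$, with $\bw\in Q$ and Jacobian $z^d$. From the definitions of the perspective functions, $\tilde f(z\bw,z)=zf(\bw)$ and $\tilde\mu(z\bw,z)=z\mu(\bw)$, giving
\[
V(z)=z^{d+1}\int_Q\bigl(\mu(\bw)-f(\bw)\bigr)\,d\bw.
\]

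Finally, Fubini gives
\[
\vol(P(f,Q))=\int_0^1 V(z)\,dz=\Bigl(\int_0^1 z^{d+1}\,dz\Bigr)\int_Q(\mu-f)\,d\bw=\frac{1}{d+2}\int_Q\mu(\bw)\,d\bw-\frac{1}{d+2}\int_Q f(\bw)\,d\bw,
\]
which is the claimed identity.

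The only subtle step is the first one: justifying that $\convcl(\cdot)$ adds nothing of positive Lebesgue measure. The convexity of $S$ follows from standard facts (convexity of the perspective, concavity of $\tilde\mu$, and the fact that $\{(\bx,z):\bx\in z\cdot Q,\,z>0\}$ is a convex cone truncated by $z\leq 1$). Continuity of $f$ on $Q$ (which is a hypothesis) ensures the integrand $\mu-f$ is Lebesgue-integrable on $Q$, so each step above is rigorous.
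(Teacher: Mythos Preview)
Your proof is correct and follows essentially the same approach as the paper: slice in $z$, change variables $\bx=z\bw$ to pull the domain back to $Q$, and integrate $z^{d+1}$ over $[0,1]$. The paper's proof is terser and simply writes down the iterated integral without discussing whether $\convcl$ adds anything of positive measure; your added justification that $S$ is already convex and that the closure contributes only the single point $(\mathbf{0}_d,0,0)$ is a welcome bit of extra rigor, but the computational core is identical.
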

\begin{proof}
    Considering the definition of the perspective relaxation, its volume is
    \begin{align*}
    \vol(P(f,Q)) &= \int_{\bx\in z\cdot Q,0\le z\le 1} (z\mu(\bx/z) - zf(\bx/z))d\bx dz \\
    &= \int_{0\le z\le 1}z^d\int_{\tilde{\bx}\in Q}(z\mu(\tilde{\bx}) - zf(\tilde{\bx})) d\tilde{\bx} dz\\
    &= \int_{0\le z\le 1}z^{d+1}dz\int_{\tilde{\bx}\in Q}(\mu(\tilde{\bx}) -f(\tilde{\bx})) d\tilde{\bx}\\
    &=\frac{1}{(d+2)}\int_{Q}(\mu(\bx)-f(\bx)) d\bx. \tag*{\qed}
    \end{align*}
\end{proof}

Theorem \ref{thm:perspecvol} reduces the calculation of $\vol(P(f,Q))$  to the calculation of the integral $\int_{Q} \mu(\bx)d\bx$ and $\int_{Q} f(\bx)d\bx$.
We focus on computing $\int_{Q} f(\bx)d\bx$ in Sections \ref{sec:basics}, \ref{sec:compare}, and we discuss $\int_{Q} \mu(\bx)d\bx$ in Section \ref{sec:cut_off} to compare different upper bounds $\mu$.
We note that the calculation of $\vol(P^0(f,Q))$  generally requires one more layer of integration (see Theorem \ref{thm:naivevol}), while the
case of homogeneous $f$ does not (see Subsection \ref{sec:homog}).
\begin{theorem}\label{thm:naivevol}
    Suppose that $f(\mathbf{0})=0$ and $f$ is continuous and convex on $\conv(Q\cup \{\mathbf{0}\})$, where the polytope $Q\subset \R^d_{\geq 0}\setminus \{ \mathbf{0}\}$. Then
 \begin{align*}
    \vol(P^0(f,Q))
    &=\frac{1}{(d+2)}\int_{Q}\mu(\bx)d\bx - \int_0^1 z^d\int_{Q} f(z\bx) d\bx dz.
    \end{align*}
\end{theorem}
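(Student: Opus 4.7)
The plan is to mimic the proof of Theorem \ref{thm:perspecvol} but with the lower bound on $y$ being $f(\bx)$ rather than $\tilde f(\bx,z)$. Concretely, I would write
\begin{align*}
\vol(P^0(f,Q)) = \int_0^1\!\int_{\bx\in z\cdot Q} \bigl(\tilde\mu(\bx,z) - f(\bx)\bigr)\,d\bx\,dz,
\end{align*}
and split this into the $\tilde\mu$ part and the $f$ part. For the $\tilde\mu$ integral, the change of variables $\bx = z\tilde\bx$ (with $\tilde\bx\in Q$ and Jacobian $z^d$) collapses it exactly as in Theorem \ref{thm:perspecvol}, because $\tilde\mu(z\tilde\bx,z) = z\mu(\tilde\bx)$; this gives $\int_0^1 z^{d+1}\,dz \cdot \int_Q \mu(\bx)\,d\bx = \frac{1}{d+2}\int_Q \mu(\bx)\,d\bx$. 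For the $f$ integral, the same substitution gives $\int_0^1\int_{\tilde\bx\in Q} f(z\tilde\bx) z^d\,d\tilde\bx\,dz$, which is exactly the second term claimed.

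Before doing the substitutions, I need to justify that the integrand $\tilde\mu(\bx,z)-f(\bx)$ is actually nonnegative on the region $\{(\bx,z) : \bx\in z\cdot Q,\ 0\le z\le 1\}$, so that the expression truly equals the Lebesgue measure of $P^0(f,Q)$ and no $(\cdot)^+$ is being silently inserted. This is where the hypotheses $f(\mathbf 0)=0$ and convexity of $f$ on $\conv(Q\cup\{\mathbf 0\})$ come in: for $\bx = z\tilde\bx$ with $\tilde\bx\in Q$ and $z\in[0,1]$, convexity gives $f(\bx) \le z f(\tilde\bx) + (1-z)f(\mathbf 0) = z f(\bx/z)$, and since $\mu \ge f$ on $Q$ we then get $\tilde\mu(\bx,z) = z\mu(\bx/z) \ge z f(\bx/z) \ge f(\bx)$. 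So the integrand has the right sign, and the case $z=0$ (a measure-zero slice) can be ignored.

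The only other mild point is continuity/integrability so that Fubini applies: $f$ is continuous on $\conv(Q\cup\{\mathbf 0\})$ by hypothesis, $\mu$ is continuous on $Q$ (as a finite concave upper bound on a polytope), and all integrations are over bounded sets, so there is no issue interchanging orders.

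I do not anticipate a serious obstacle here; the proof is essentially a direct calculation. The one place to be careful is the nonnegativity argument above, which is slightly more delicate than in Theorem \ref{thm:perspecvol} because the lower bound is now $f(\bx)$ instead of the perspective $\tilde f(\bx,z)$, and this is exactly the place where the extra hypothesis $f(\mathbf 0)=0$ is used.
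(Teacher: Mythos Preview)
Your proposal is correct and follows essentially the same approach as the paper: write the volume as $\int_0^1\int_{z\cdot Q}(\tilde\mu(\bx,z)-f(\bx))\,d\bx\,dz$, substitute $\bx=z\tilde\bx$ with Jacobian $z^d$, and separate the two terms. You are in fact more careful than the paper in explicitly verifying nonnegativity of the integrand via convexity and $f(\mathbf 0)=0$, and in noting the Fubini/continuity justification; the paper simply writes down the computation.
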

\begin{proof}
   Considering the definition of the na\"{i}ve relaxation, its volume is
    \begin{align*}
    \vol(P^0(f,Q)) &= \int_{\bx\in z\cdot Q,0\le z\le 1} (z\mu(\bx/z) - f(\bx))d\bx dz \\
    &= \int_{0\le z\le 1}z^d\int_{\tilde{\bx}\in Q}(z\mu(\tilde{\bx}) - f(z\tilde{\bx})) d\tilde{\bx} dz\\
    &= \int_{0\le z\le 1}z^{d+1}dz\int_{\tilde{\bx}\in Q}\mu(\tilde{\bx}) d\tilde{\bx} - \int_0^1 z^d\int_{Q} f(z\bx) d\bx dz\\
    &=\frac{1}{(d+2)}\int_{Q}\mu(\bx)d\bx - \int_0^1 z^d\int_{Q} f(z\bx) d\bx dz. \tag*{\qed}
    \end{align*}
\end{proof}

\begin{corollary}\label{cor:diff}
    Suppose that $f(\mathbf{0})=0$ and $f$ is continuous and convex on $\conv(Q\cup \{\mathbf{0}\})$, where the polytope $Q\subset \R^d_{\geq 0}\setminus \{ \mathbf{0}\}$. Then
 \begin{align*}
    \Delta(P^0,P)
    &=\frac{1}{(d+2)}\int_{Q}f(\bx)d\bx - \int_0^1 z^d\int_{Q} f(z\bx) d\bx dz.
    \end{align*}
\end{corollary}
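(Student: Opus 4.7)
The plan is to obtain the formula for $\Delta(P^0,P)$ by simply subtracting the expression for $\vol(P(f,Q))$ in Theorem~\ref{thm:perspecvol} from the expression for $\vol(P^0(f,Q))$ in Theorem~\ref{thm:naivevol}. Since both formulae share the common term $\tfrac{1}{d+2}\int_Q \mu(\bx)\,d\bx$, the upper-bound function $\mu$ cancels entirely from the difference, which is precisely the statement (already advertised in the introduction) that the cut-off amount is independent of the choice of concave upper bound $\mu$.

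In detail, I would first verify that the hypotheses of both theorems are met under the assumptions of the corollary: we need $f$ continuous and convex on $\conv(Q\cup\{\mathbf{0}\})$ together with $f(\mathbf{0})=0$, which is exactly what Theorem~\ref{thm:naivevol} requires, and this is strictly stronger than the hypothesis of Theorem~\ref{thm:perspecvol} (which only needs $f$ continuous and convex on $Q$). Then I would write
\begin{align*}
\Delta(P^0,P) &= \vol(P^0(f,Q)) - \vol(P(f,Q)) \\
&= \left(\tfrac{1}{d+2}\!\int_Q \mu(\bx)\,d\bx - \!\int_0^1\! z^d\!\int_Q f(z\bx)\,d\bx\, dz\right) \\
&\qquad - \left(\tfrac{1}{d+2}\!\int_Q \mu(\bx)\,d\bx - \tfrac{1}{d+2}\!\int_Q f(\bx)\,d\bx\right),
\end{align*}
and cancel the $\mu$ integrals to obtain the claimed formula.

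There is no real obstacle here; the result is a one-line corollary of the two preceding theorems. The only thing worth flagging in the write-up is the conceptual content, namely that the cancellation of $\int_Q \mu(\bx)\,d\bx$ shows the $\mu$-independence of $\Delta(P^0,P)$, so that later sections comparing different choices of $\mu$ only need to track how $\mu$ influences $\vol(P^0)$ and $\vol(P)$ individually (via the common integral $\tfrac{1}{d+2}\int_Q \mu(\bx)\,d\bx$), not their difference.
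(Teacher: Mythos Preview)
Your proposal is correct and matches the paper's approach: the corollary is stated without proof immediately after Theorems~\ref{thm:perspecvol} and~\ref{thm:naivevol}, with the implicit understanding that one simply subtracts the two formulae, and the subsequent Remark records exactly the $\mu$-independence you highlight.
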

\begin{remark}
$\Delta(P^0,P)$ is independent of the concave upper bound $\mu$.
\end{remark}

\subsection{\texorpdfstring{$q$}{q}-homogeneous functions}\label{sec:homog}

Suppose that $f(\bx)$ is $q$-homogeneous, i.e., $f(\lambda \bx) =\lambda^q f(\bx)$ for $\lambda\ge 0$ ($\lambda=0$ implies that $f(\mathbf{0}) = 0$). Then, for a general polytope $Q$, we can compute the volume of the na\"{i}ve relaxation and
compare it to that of the perspective relaxation.
\begin{lemma}\label{lem:naivevol_qhomo}
    Suppose that $f$ is $q$-homogeneous ($q\ge 1$) and convex on $\conv(Q\cup \{\mathbf{0}\})$, where the polytope $Q\subset \R^d_{\geq 0}\setminus \{ \mathbf{0}\}$. Then
 \begin{align*}
    \int_0^1 z^d\int_Q f(z\bx)d\bx dz &=\frac{1}{q+d+1}\int_{Q} f(\bx) d\bx.
    \end{align*}
\end{lemma}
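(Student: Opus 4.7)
The plan is essentially a one-line computation, so the proposal amounts to identifying which of the hypotheses in the lemma is doing the work and how.

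First I would apply the defining property of $q$-homogeneity directly inside the inner integral: for each fixed $z \in [0,1]$ and each $\bx \in Q$, we have $f(z\bx) = z^q f(\bx)$. Since $z \ge 0$ on the domain of integration, this identity is exactly the hypothesis of $q$-homogeneity with $\lambda = z$; no subtlety from the $\lambda = 0$ case is needed because that value contributes nothing to the integral.

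Next I would pull the factor $z^q$ (which does not depend on $\bx$) out of the inner integral, combine it with the existing $z^d$ to get $z^{q+d}$, and separate the resulting double integral into a product using Fubini. The $\bx$-integral is finite since $f$ is continuous (hence bounded) on the compact set $\conv(Q \cup \{\mathbf{0}\})$, which justifies the interchange of order. Finally I would evaluate $\int_0^1 z^{q+d} dz = 1/(q+d+1)$, obtaining exactly the claimed right-hand side.

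The only remotely delicate point is making sure the hypotheses of the lemma actually supply what we need: convexity itself is not used here, but continuity of $f$ on $\conv(Q \cup \{\mathbf{0}\})$ guarantees integrability over $Q$ (and over $z \cdot Q$ for $z \in [0,1]$), and the condition $q \ge 1$, together with $f(\mathbf{0}) = 0$ implicit in $q$-homogeneity, ensures that the formula $f(z\bx) = z^q f(\bx)$ extends continuously to $z = 0$. I do not expect any real obstacle; the lemma is essentially a bookkeeping identity that isolates the homogeneity exponent from the volume factor, setting up the clean comparison with Theorem \ref{thm:perspecvol} via Corollary \ref{cor:diff}.
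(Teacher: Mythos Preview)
Your proposal is correct and follows exactly the paper's approach: apply $q$-homogeneity to write $f(z\bx)=z^q f(\bx)$, factor the iterated integral as $\int_0^1 z^{q+d}\,dz \cdot \int_Q f(\bx)\,d\bx$, and evaluate the $z$-integral. The paper's proof is even terser than yours and does not pause over Fubini or integrability, but the argument is identical.
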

\begin{proof}
    Because $f(\bx)$ is $q$-homogeneous, we have $f(z\bx) = z^q f(\bx)$, and we obtain
    \begin{align*}
    &
    \int_0^1 z^d\int_Q f(z\bx)d\bx dz = \int_{0}^1 z^{q+d} dz\int_{\bx\in Q} f(\bx) d\bx = \frac{1}{q+d+1}\int_{Q} f(\bx) d\bx.
    \end{align*}
    The result follows.
\qed \end{proof}

\begin{corollary}\label{cor:homo_delta}
Suppose that $f$ is $q$-homogeneous ($q\ge 1$) and convex on $\conv(Q\cup \{\mathbf{0}\})$, where the polytope $Q\subset \R^d_{\geq 0}\setminus \{ \mathbf{0}\}$. Then
\begin{align*}
  & \Delta(P^0,P)=\frac{q-1}{(d+2)(q+d+1)}\int_Q f(\bx)d\bx.
\end{align*}
\end{corollary}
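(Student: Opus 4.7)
The plan is to combine Corollary \ref{cor:diff} with Lemma \ref{lem:naivevol_qhomo} by direct substitution. First I would check that the hypotheses of both preceding results are actually in force: $q$-homogeneity with the convention $f(\lambda \bx)=\lambda^q f(\bx)$ for $\lambda\ge 0$ gives $f(\mathbf{0})=0$ by taking $\lambda=0$, and convexity on $\conv(Q\cup\{\mathbf{0}\})$ together with $q\ge 1$ yields continuity on the interior and at $\mathbf{0}$. So the assumption of Corollary \ref{cor:diff} (hence also of Theorems \ref{thm:perspecvol} and \ref{thm:naivevol}) is satisfied.

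Next, I would invoke Corollary \ref{cor:diff} to write
\begin{align*}
\Delta(P^0,P) = \frac{1}{d+2}\int_{Q}f(\bx)\,d\bx - \int_0^1 z^d\int_{Q} f(z\bx)\,d\bx\,dz,
\end{align*}
and then apply Lemma \ref{lem:naivevol_qhomo} to collapse the inner iterated integral to $\frac{1}{q+d+1}\int_{Q} f(\bx)\,d\bx$. Factoring out the common integral gives a scalar coefficient $\frac{1}{d+2}-\frac{1}{q+d+1}$, and placing this over the common denominator $(d+2)(q+d+1)$ yields $\frac{(q+d+1)-(d+2)}{(d+2)(q+d+1)} = \frac{q-1}{(d+2)(q+d+1)}$, which is exactly the claimed coefficient.

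There is essentially no obstacle in this derivation; all the content has been extracted already in Corollary \ref{cor:diff} (which packages Theorems \ref{thm:perspecvol} and \ref{thm:naivevol}) and in Lemma \ref{lem:naivevol_qhomo} (which is the point where $q$-homogeneity enters, via the separation $f(z\bx)=z^q f(\bx)$ that decouples the $z$-integration from the $\bx$-integration). The only noteworthy feature of the resulting formula is that the numerator $q-1$ vanishes exactly when $q=1$, consistent with the fact that a $1$-homogeneous convex function agrees with its own perspective on the relevant cone, so that perspective strengthening contributes no cut-off.
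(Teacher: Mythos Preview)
Your proposal is correct and follows exactly the paper's own approach: the paper's proof simply states that the result follows from Corollary~\ref{cor:diff} and Lemma~\ref{lem:naivevol_qhomo}, and you have spelled out that substitution (along with the coefficient arithmetic and the observation that $q$-homogeneity forces $f(\mathbf{0})=0$) in full.
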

\begin{proof}
It follows from Corollary \ref{cor:diff} and Lemma \ref{lem:naivevol_qhomo}.
\qed\end{proof}

\section{Comparing relaxations on some zonotope domains}\label{sec:compare}
In this section, we consider the zonotope case $Q:=
\{\bx=\bA\by+\bb\in\R^d ~:~ \by\in[0,1]^n\}$,
where $\bA\in\R^{d\times n}$, $\rank \bA =n\le d$, which includes the box case when $n=d$.
We derive formulae for the integral $\int_Q f(\bx) d\bx$ and $\Delta(P^0,P)$ for two natural families of convex functions: power of linear forms $f(\bx):=(\bc^\top\bx)^q$ ($q\ge1$) and exponential function $f(\bx):=e^{\bc^\top\bx}-1$.
The exponential function is often used to model probabilities in various contexts, such as Koopman's theory of search (e.g., \cite{koopman1953optimum}) and resource allocation (e.g., \cite{patriksson2008survey}). It is also a key component of geometric programming (e.g., \cite{serrano2015algorithms}). Additionally, in many applications of regression, the dependent variable is logarithmically transformed (e.g., financial data) after an appropriate shift, to achieve a linear model; so the linearized regression model $\log(y+1)=\bc^\top \bx$ is equivalent to
$y=f(x):=e^{\bc^\top \bx} -1$ (the shift forces the curve to go through $(\mathbf{0},0)$).
Powers of linear forms can represent a larger family of functions. For example, a polynomial can be decomposed into the sum of powers of linear forms (see \cite{baldoni_how_2010}). Alexander and Hirschowitz answered the generic problem of finding a decomposition with the smallest possible number of summands, which is the well-known polynomial Waring problem (see \cite{MR1311347}). Finally, as we motivated in our study of the simplex case, these two families of convex functions are natural generalizations of the ubiquitous univariate function $x^q$ ($q\geq 1$) and $e^x-1$.
\subsection{Power of linear forms}
We have the following result to compute $\int_Q f(\bx) d\bx$ for $f(\bx)=(\bc^\top\bx)^q$ ($q\ge1$).

\begin{proposition}\label{prop:power_linear}
For $Q:=
\{\bx=\bA\by+\bb\in\R^d ~:~ \by\in[0,1]^n\}$,
where $\bA\in\R^{d\times n}$, $\rank \bA =n\le d$, and $f(\bx) := (\bc^\top \bx)^q$ ($q\ge 1$), we have
\begin{align*}
    \int_{Q} f(\bx) d\bx  &= \sqrt{\det(\bA^\top \bA)}\int_{[0,1]^n}(\bc^\top \bA \by + \bc^\top\bb)^q d\by.
\end{align*}
In particular, when $Q:=Q_u$ ($\bA=u\mathbf{I}_d$, $\bb=\bv_0$), a unit hypercube scaled by $u$ and translated by $\bv_0$, then
\begin{align*}
    \int_{Q} f(\bx) d\bx &= u^{q+d}\int_{[0,1]^d}\left(\bc^\top\by+\frac{\bc^\top\bv_0}{u}\right)^q d\by.
\end{align*}
\end{proposition}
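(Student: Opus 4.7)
The plan is to prove both formulas by a single affine change of variables. The zonotope $Q$ is by definition the image of the unit cube $[0,1]^n$ under the affine map $\phi(\by) := \bA\by + \bb$, and since $\rank \bA = n$, this map is injective on $[0,1]^n$. The derivative $D\phi \equiv \bA$ is constant, so the area formula for parametrized $n$-surfaces in $\R^d$ supplies the constant Jacobian factor $\sqrt{\det(\bA^\top\bA)}$ (this is the standard Gram-determinant form of the $n$-dimensional volume element on the image of $\phi$, which specializes to $|\det\bA|$ in the full-rank case $n=d$). Substituting $\bx = \bA\by + \bb$ into $f(\bx) = (\bc^\top \bx)^q$ gives $f(\phi(\by)) = (\bc^\top\bA\by + \bc^\top\bb)^q$, and hence
\begin{align*}
\int_Q f(\bx)\,d\bx \;=\; \sqrt{\det(\bA^\top\bA)}\int_{[0,1]^n} (\bc^\top\bA\by + \bc^\top\bb)^q\,d\by,
\end{align*}
which is the first claim.

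For the specialization to $Q = Q_u$, I set $\bA = u\mathbf{I}_d$ and $\bb = \bv_0$ (so in particular $n=d$). The Jacobian simplifies to $\sqrt{\det(u^2\mathbf{I}_d)} = u^d$. Inside the integrand, $\bc^\top\bA\by + \bc^\top\bb = u\bc^\top\by + \bc^\top\bv_0$, which factors as $u\bigl(\bc^\top\by + \bc^\top\bv_0/u\bigr)$; taking the $q$-th power extracts an additional factor of $u^q$. Multiplying by the Jacobian $u^d$ assembles the advertised prefactor $u^{q+d}$.

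Essentially no technical obstacle arises: the only mild subtlety is that when $n<d$, the zonotope $Q$ has $d$-dimensional Lebesgue measure zero, so $\int_Q$ must be interpreted as integration against the $n$-dimensional Hausdorff measure on the image of $\phi$; the $\sqrt{\det(\bA^\top\bA)}$ factor is then exactly what arises (via Cauchy--Binet) as the corresponding volume element. Since the integrand on the right is a polynomial in $\by$, the resulting expression is well-suited for the explicit computations in subsequent sections.
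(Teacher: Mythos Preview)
Your proof is correct and follows essentially the same route as the paper: both arguments reduce to the affine change of variables $\bx=\bA\by+\bb$, picking up the Gram-determinant Jacobian $\sqrt{\det(\bA^\top\bA)}$, and then specialize to $\bA=u\mathbf{I}_d$, $\bb=\bv_0$ by factoring $u^q$ out of the integrand and $u^d$ from the Jacobian. Your added remark that for $n<d$ the integral must be read as integration against $n$-dimensional Hausdorff measure is a clarification the paper leaves implicit.
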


\begin{proof}
By affine transformation of the variables, we have
\begin{align*}
    \int_{Q} f(\bx) d\bx  &= \sqrt{\det(\bA^\top \bA)}\int_{[0,1]^n} f(\bA \by+\bb)d\by\\
    &=\sqrt{\det(\bA^\top \bA)}\int_{[0,1]^n}\left(\bc^\top \bA \by + \bc^\top\bb\right)^q d\by.
\end{align*}
If $\bA=u\mathbf{I}_d$ (which implies $n=d$) and $\bb=\bv_0$, then we have
\begin{align*}
    \int_{Q} f(\bx) d\bx &= u^d\int_{[0,1]^d}\left(u\bc^\top\by+\bc^\top\bv_0\right)^q d\by= u^{q+d}\int_{[0,1]^d}\left(\bc^\top\by+\frac{\bc^\top\bv_0}{u}\right)d\by. \tag*{\qed}
\end{align*}
\end{proof}

Suppose that $\bc>0$ and $\bv_0>0$; then $f(\bx)=(\bc^\top\bx)^q$ ($q>1$)
is convex on $\conv(Q_u\cup \{\mathbf{0}\})$. Because $f(\bx)$ is also $q$-homogeneous, we can use Corollary \ref{cor:homo_delta} and Proposition \ref{prop:power_linear} to obtain:

\begin{corollary}
For $f(\bx) := (\bc^\top \bx)^q (\bc>0, q\ge1)$ and $Q:=Q_u$, we have
\begin{align*}
  & \Delta(P^0,P)=\frac{q-1}{(d+2)(q+d+1)}u^{q+d}\int_{[0,1]^d} \left(\bc^\top \by+\frac{\bc^\top\bv_0}{u}\right)^q d\by.
\end{align*}
\end{corollary}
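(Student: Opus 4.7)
The plan is to obtain the claim as an immediate consequence of chaining Corollary \ref{cor:homo_delta} with Proposition \ref{prop:power_linear} in the special case $Q = Q_u$. To set this up I would first verify that the hypotheses of Corollary \ref{cor:homo_delta} are in force. The function $f(\bx) = (\bc^\top \bx)^q$ is manifestly $q$-homogeneous, since $(\bc^\top(\lambda\bx))^q = \lambda^q (\bc^\top\bx)^q$ for $\lambda \geq 0$, and it satisfies $f(\mathbf{0}) = 0$. Convexity on $\conv(Q_u \cup \{\mathbf{0}\})$ is exactly the observation recorded in the sentence immediately preceding the corollary: since $\bc > 0$ and $\bv_0 > 0$, the linear form $\bc^\top \bx$ is nonnegative throughout $\conv(Q_u \cup \{\mathbf{0}\})$, and the composition of a nonnegative affine function with the convex nondecreasing map $t \mapsto t^q$ (for $q \geq 1$) is convex.

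With the hypotheses verified, Corollary \ref{cor:homo_delta} gives
\[
\Delta(P^0,P) = \frac{q-1}{(d+2)(q+d+1)} \int_{Q_u} f(\bx) \, d\bx.
\]
The second step is to substitute the explicit formula for this integral obtained in the second part of Proposition \ref{prop:power_linear}, namely
\[
\int_{Q_u} f(\bx) \, d\bx = u^{q+d} \int_{[0,1]^d} \left(\bc^\top \by + \frac{\bc^\top \bv_0}{u}\right)^q d\by,
\]
which was established by the affine change of variables $\bx = u\by + \bv_0$. Inserting this into the previous display yields precisely the stated expression.

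There is no real obstacle: the result is essentially a bookkeeping combination of two results already proved. The only point requiring care is making sure the two sets of hypotheses line up, i.e., that continuity, convexity on $\conv(Q_u \cup \{\mathbf{0}\})$, and $q$-homogeneity are all satisfied simultaneously, which the sign conditions $\bc > 0$ and $\bv_0 > 0$ guarantee.
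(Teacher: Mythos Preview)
Your proposal is correct and follows exactly the route the paper takes: the corollary is stated immediately after the sentence ``Because $f(\bx)$ is also $q$-homogeneous, we can use Corollary \ref{cor:homo_delta} and Proposition \ref{prop:power_linear} to obtain,'' and your argument is precisely that chaining, together with the hypothesis check that $\bc>0$, $\bv_0>0$ ensure convexity of $(\bc^\top\bx)^q$ on $\conv(Q_u\cup\{\mathbf{0}\})$.
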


\begin{remark}
For $f(\bx) := (\bc^\top \bx)^q (\bc>0, q\ge1)$ and $Q:=Q_u$,
$\int_{[0,1]^d}(\bc^\top\by)^q d\by$ and $\bc^\top\bv_0$ are both positive constants independent of $u$.
Therefore, $\int_{u[0,1]^d} f(\bx) d\bx$ and $\Delta(P^0,P)$ both have the order of $O(u^{q+d})$ as $u$ tends to infinity.
\end{remark}

To better understand the cut-off ratio studied in the next section, we are also interested in analyzing the ratio between $\int_{[0,1]^n}\left(\bc^\top \by+\frac{\bc^\top\bv_0}{u}\right)^q d\by$ and $\left(\bc^\top\mathbf{1}+\frac{\bc^\top\bv_0}{u}\right)^q=\max_{\by\in[0,1]^d}\left(\bc^\top \by+\frac{\bc^\top\bv_0}{u}\right)^q$.
It is clear that the ratio is upper bounded by $1$.
A lower bound on the ratio independent of $\bc$ will work for our analysis in the next section.
To inspire the analysis for the lower bound and also to collect the tools to compute the ratio for a specific $\bc$, we first provide two methods to compute  $\int_{[0,1]^d}(\bc^\top\by)^q d\by$ when $q\in\mathbb{Z}_{>1}$: one is to use the multinomial theorem and iterated integration on monomials; the other is to use a triangulation of the hypercube. Then we provide a lower bound using the idea of triangulation and the results from the simplex case.
\begin{proposition}\label{prop:two_methods}
For $q\in\mathbb{Z}_{>1}$, we have
\begin{equation}\label{eqn:power_linear_form_direct_int}
    \int_{[0,1]^n} \left(\bc^\top\by\right)^q d\by=\sum_{\substack{\|\alpha\|_1=q,~\alpha\in\mathbb{Z}^n_{\ge0}}}\frac{q!c_1^{\alpha_1}\dots c_n^{\alpha_n}}{(\alpha_1+1)!\dots(\alpha_n+1)!}.
\end{equation}
Suppose that for any nonempty subset $S$ of $[n]$; $\sum_{j\in S}c_j\ne 0$, then we have
\begin{equation}\label{eqn:power_linear_form_triangulation_int}
    \int_{[0,1]^n} \left(\bc^\top\by\right)^q d\by =\sum_{(i_1,\dots,i_n)\in\Omega}\frac{q!}{(q+n)!}\sum_{j=0}^n \frac{\left(\bc_{(i_1,\dots,i_n)}^\top\bw_j\right)^{q+n}}{\prod_{k\ne j}\left(\bc_{(i_1,\dots,i_n)}^\top(\bw_j-\bw_k)\right)},
\end{equation}
where $\bc_{(i_1,\dots,i_n)}:=(c_{i_1},\dots,c_{i_n})$, $\bw_0:=\mathbf{0}$, $\bw_j:=\sum_{\ell=n+1-j}^n \be_\ell$, and $\Omega$ is the set of all permutations of $[n]$.
\end{proposition}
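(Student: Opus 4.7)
The plan is to prove the two identities by separate mechanisms. Formula \eqref{eqn:power_linear_form_direct_int} follows directly from the multinomial theorem: expand
\[
(\bc^\top\by)^q \;=\; \sum_{\|\alpha\|_1 = q}\frac{q!}{\alpha_1!\cdots\alpha_n!}\,c_1^{\alpha_1}\cdots c_n^{\alpha_n}\,y_1^{\alpha_1}\cdots y_n^{\alpha_n},
\]
integrate termwise using $\int_0^1 y_j^{\alpha_j}\,dy_j = 1/(\alpha_j+1)$, and absorb each factor $1/(\alpha_j+1)$ into $\alpha_j!$ to produce $(\alpha_j+1)!$ in the denominator.

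For formula \eqref{eqn:power_linear_form_triangulation_int}, the idea is to triangulate $[0,1]^n$ into the $n!$ order simplices and then apply a standard formula for integrating a power of a linear form over a simplex. For each $\pi = (i_1,\dots,i_n) \in \Omega$ set $\Delta_\pi := \{\by\in[0,1]^n : y_{i_1}\le\cdots\le y_{i_n}\}$; these tile $[0,1]^n$ with pairwise disjoint interiors. The reference simplex $\Delta_0 := \conv\{\bw_0,\dots,\bw_n\}$ is the order simplex corresponding to the identity permutation, and for general $\pi$ the permutation matrix $P_\pi$ defined by $P_\pi\be_\ell = \be_{i_\ell}$ is volume-preserving and satisfies $P_\pi\Delta_0 = \Delta_\pi$. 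The change of variables $\by\mapsto P_\pi\by$ then gives
\[
\int_{\Delta_\pi}(\bc^\top\by)^q\,d\by \;=\; \int_{\Delta_0}\bigl(\bc_{(i_1,\dots,i_n)}^\top\by\bigr)^q\,d\by,
\]
since $P_\pi^\top\bc = \bc_{(i_1,\dots,i_n)}$.

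It remains to evaluate $\int_{\Delta_0}(\tilde\bc^\top\by)^q\,d\by$ for a generic $\tilde\bc$. Applying the standard simplex formula
\[
\int_\Delta (\tilde\bc^\top\by)^q\,d\by \;=\; \frac{q!\,n!\,\vol(\Delta)}{(q+n)!}\sum_{j=0}^n \frac{(\tilde\bc^\top\bv_j)^{q+n}}{\prod_{k\ne j}\tilde\bc^\top(\bv_j-\bv_k)}
\]
(compare \cite{xuleemulti} for the simplex case) to $\Delta_0$, whose vertices are $\bw_0,\dots,\bw_n$ and whose volume is $1/n!$, produces exactly the inner $j$-sum of \eqref{eqn:power_linear_form_triangulation_int} with coefficient $q!/(q+n)!$; summing over $\pi \in \Omega$ completes the proof. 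The main obstacle is the nondegeneracy required to invoke the simplex formula: every denominator $\bc_{(i_1,\dots,i_n)}^\top(\bw_j-\bw_k)$ must be nonzero. A direct check shows $\bw_j - \bw_k$ is $\pm$ a sum of contiguous unit vectors $\be_\ell$, so after the permutation each such denominator takes the form $\pm\sum_{\ell\in S}c_\ell$ for some nonempty $S\subseteq[n]$; as $\pi$ ranges over $\Omega$ and $j\ne k$, every nonempty $S$ arises. The hypothesis that $\sum_{j\in S}c_j\neq 0$ for all nonempty $S\subseteq[n]$ is therefore precisely what is needed to guarantee that all terms in the triangulation sum are well defined.
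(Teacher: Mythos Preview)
Your proof is correct and follows essentially the same approach as the paper: multinomial expansion with termwise integration for \eqref{eqn:power_linear_form_direct_int}, and Kuhn's triangulation of $[0,1]^n$ combined with Brion's short formula for integrating a power of a linear form over a simplex (which is exactly \cite[Equation~(3)]{xuleemulti}) for \eqref{eqn:power_linear_form_triangulation_int}. Your exposition is a bit more detailed than the paper's in making the permutation change of variables explicit and in verifying that the genericity hypothesis on $\bc$ is precisely what is needed to make every denominator nonzero, but the underlying argument is identical.
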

\begin{proof}
By the multinomial theorem, we decompose $(\bc^\top\by)^q$ and obtain
\begin{align*}
    \int_{[0,1]^n} \left(\bc^\top\by\right)^q d\by&= \sum_{\substack{\|\alpha\|_1=q,~\alpha\in\mathbb{Z}^n_{\ge0}}}\frac{q!c_1^{\alpha_1}\dots c_n^{\alpha_n}}{\alpha_1!\dots \alpha_n!}\int_{[0,1]^d}y_1^{\alpha_1}\dots y_n^{\alpha_n} d\by\\
    &= \sum_{\substack{\|\alpha\|_1=q,~\alpha\in\mathbb{Z}^n_{\ge0}}}\frac{q!c_1^{\alpha_1}\dots c_n^{\alpha_n}}{\alpha_1!\dots \alpha_n!}\prod_{j=1}^n\int_{0}^1y_j^{\alpha_j}d y_j\\
    &= \sum_{\substack{\|\alpha\|_1=q,~\alpha\in\mathbb{Z}^n_{\ge0}}}\frac{q!c_1^{\alpha_1}\dots c_n^{\alpha_n}}{(\alpha_1+1)!\dots(\alpha_n+1)!}.
\end{align*}
Another way to compute $\int_{[0,1]^n} \left(\bc^\top\by\right)^q d\by$,
which will lead to a more tractable formula, is to triangulate the hypercube $[0,1]^n$, for example, by  Kuhn's triangulation $\Delta_{i_1,\dots,i_n}:=\{\bx: 0\le x_{i_1}\le\dots \le x_{i_n}\le 1\}$, for any permutation $(i_1,\dots,i_n)$ of $[n]$.
Because of the assumption on $\bc$, we know that $\bc_{(i_1,\dots,i_n)}^\top(\bw_j-\bw_k)\ne 0$ for any $(i_1,\dots,i_n)\in\Omega$ and $j\ne k$.
By using the short formulae of Brion for simplices (\cite[Equation (3)]{xuleemulti}, \cite{brion1988points}), we obtain a simpler formula for the calculation.
\begin{align*}
    \int_{[0,1]^n} \left(\bc^\top\by\right)^q d\by &= \sum_{(i_1,\dots,i_n)\in\Omega}\int_{\Delta_{i_1,\dots,i_n}} \left(\bc^\top\by\right)^q d\by\\
    &=\sum_{(i_1,\dots,i_n)\in\Omega}\frac{q!}{(q+n)!}\sum_{j=0}^n \frac{\left(\bc_{(i_1,\dots,i_n)}^\top\bw_j\right)^{q+n}}{\prod_{k\ne j}\left(\bc_{(i_1,\dots,i_n)}^\top(\bw_j-\bw_k)\right)}. \tag*{\qed}
\end{align*}
\end{proof}
\begin{remark}\label{rem:all_one}
For $\bc=\mathbf{1}$, using \eqref{eqn:power_linear_form_triangulation_int}, we have
\begin{align*}
    \int_{[0,1]^n} \left(\mathbf{1}^\top\by\right)^q d\by &= \sum_{(i_1,\dots,i_n)\in\Omega}\frac{q!}{(q+n)!}\sum_{j=0}^n \frac{\left(\mathbf{1}^\top\bw_j\right)^{q+n}}{\prod_{k\ne j}\left(\mathbf{1}^\top(\bw_j-\bw_k)\right)}\\
    &=\frac{q!n!}{(q+n)!}\sum_{j=0}^n \frac{j^{q+n}}{\prod_{k\ne j}(j-k)}\\
    &=\frac{q!}{(q+n)!}\sum_{j=0}^n (-1)^{n-j} \binom{n}{j} j^{q+n}.
\end{align*}
And we obtain
\[
\int_{[0,1]^n} \left(\mathbf{1}^\top\by\right)^q d\by=\sum\limits_{(i_1,\dots,i_n)\in\Omega}\int_{\Delta_{i_1,\dots,i_n}} \left(\mathbf{1}^\top\by\right)^q d\by
\geq \frac{q!n!}{(q+n+1)!}\sum_{j=0}^n j^q,
\]
where the lower bound for $\int_{\Delta_{i_1,\dots,i_n}}\left(\mathbf{1}^\top \by\right)^q d\by$ comes from \cite[Lemma 4.5]{xuleemulti}.
\end{remark}
We cannot obtain easily from \eqref{eqn:power_linear_form_direct_int} a better lower bound than $\frac{q!n!}{(q+n+1)!}\sum_{j=0}^n j^q$.
So, instead, we use the idea of triangulation to obtain the following better lower bound on the ratio between $\int_{[0,1]^n}\left(\bc^\top \by+\frac{\bc^\top\bv_0}{u}\right)^q d\by$ and $\left(\bc^\top\mathbf{1}+\frac{\bc^\top\bv_0}{u}\right)^q$.

\begin{lemma}\label{lem:c}
If $q\ge 1$, $\bc> 0$, $\bv_0>0$ and $u>0$, then
$$
\int_{[0,1]^d} \left(\bc^\top \bx+ \frac{\bc^\top\bv_0}{u}\right)^q d\bx \ge \frac{\Gamma(q+1)d!}{\Gamma(q+d+1)}\frac{\sum_{j=1}^{d}j^q}{d^q}\left(\bc^\top\mathbf{1}+\frac{\bc^\top\bv_0}{u}\right)^q.
$$
\end{lemma}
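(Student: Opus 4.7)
The plan is to triangulate $[0,1]^d$ by Kuhn's triangulation into $d!$ simplices $\Delta_\sigma$ (one per permutation $\sigma\in S_d$), apply a simplex-case lower bound extending \cite[Lemma~4.5]{xuleemulti} on each piece, and then average over $\sigma$ via Jensen's inequality. Write $\alpha := \bc^\top \bv_0/u\ge 0$ and $M := \bc^\top\mathbf{1}+\alpha$. Each $\Delta_\sigma$ has volume $1/d!$ with vertices $\bw_0^\sigma=\mathbf{0}$, $\bw_j^\sigma=\sum_{\ell=d-j+1}^d \be_{\sigma(\ell)}$, and $\bw_d^\sigma=\mathbf{1}$, so the vertex values $v_j^\sigma := \bc^\top\bw_j^\sigma + \alpha$ satisfy $v_0^\sigma=\alpha$ and $v_d^\sigma=M$.

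First, I would establish the per-simplex bound
$$\int_{\Delta_\sigma}(\bc^\top\bx+\alpha)^q\,d\bx \;\geq\; \frac{\Gamma(q+1)}{\Gamma(q+d+1)}\sum_{j=0}^d (v_j^\sigma)^q.$$
Using barycentric coordinates $\lambda_0,\ldots,\lambda_d$ on $\Delta_\sigma$ (parameterized by $\lambda_1,\ldots,\lambda_d$ with $\lambda_0=1-\sum_{j\ge 1}\lambda_j$, for which the Jacobian is $d!\vol(\Delta_\sigma)=1$), the integrand becomes $\bigl(\sum_{j=0}^d \lambda_j v_j^\sigma\bigr)^q$. For $q\ge 1$ and nonnegative terms, the elementary superadditivity $(\sum_j y_j)^q \ge \sum_j y_j^q$ gives $(\sum_j \lambda_j v_j^\sigma)^q \ge \sum_j \lambda_j^q (v_j^\sigma)^q$, and the Dirichlet integral $\int \lambda_j^q\,d\lambda = \Gamma(q+1)/\Gamma(q+d+1)$ (the same for every $j$) yields the claim. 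For integer $q$ the same bound drops out directly by retaining only the $d+1$ ``pure-index'' terms in the Brion multinomial expansion used in the proof of Proposition~\ref{prop:two_methods}.

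Next, I would sum the per-simplex bound over $\sigma$ and symmetrize. A routine counting argument shows that as $\sigma$ varies over $S_d$, the subset $\{\sigma(d-j+1),\ldots,\sigma(d)\}$ realizes each $j$-subset of $[d]$ exactly $j!(d-j)!$ times, so $\bar v_j := (d!)^{-1}\sum_\sigma v_j^\sigma = \alpha + (j/d)\bc^\top\mathbf{1}$. Jensen's inequality for the convex function $y\mapsto y^q$ on $[0,\infty)$ gives $\sum_\sigma (v_j^\sigma)^q \ge d!\,\bar v_j^q$, and the convex-combination estimate $\alpha + (j/d)\bc^\top\mathbf{1} = (j/d)M + (1-j/d)\alpha \ge (j/d)M$ (using $\alpha\ge 0$) upgrades this to $\sum_\sigma (v_j^\sigma)^q \ge d!\,(j/d)^q M^q$. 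Summing over $j=0,\ldots,d$ (with $0^q=0$ contributing nothing) and combining with the per-simplex bound reproduces exactly $\frac{\Gamma(q+1)d!}{\Gamma(q+d+1)}\cdot\frac{\sum_{j=1}^d j^q}{d^q}\cdot M^q$.

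The main obstacle is the per-simplex inequality for arbitrary real $q\ge 1$: the Brion/multinomial route works only for integer $q$, so the superadditivity-plus-Dirichlet derivation is what carries the general case (essentially \cite[Lemma~4.5]{xuleemulti}). The subsequent symmetrization and convex-combination steps are entirely routine.
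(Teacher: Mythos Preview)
Your proof is correct and follows essentially the same approach as the paper: Kuhn's triangulation of $[0,1]^d$, the per-simplex lower bound of \cite[Lemma~4.5]{xuleemulti}, and symmetrization over permutations via the power-mean (Jensen) inequality. The only cosmetic difference is the order of two steps---the paper first weakens the shift $\alpha$ to $\tfrac{d-j+1}{d}\alpha$ and then applies the power-mean inequality, while you apply Jensen first and then use $\bar v_j\ge (j/d)M$---and both orderings produce the identical bound.
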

\begin{proof}
We apply  Kuhn's triangulation and use \cite[Lemma 4.5]{xuleemulti} to lower bound the integral over $\Delta_{i_1,\dots,i_d}:=\{\bx: 0\le x_{i_1}\le\dots \le x_{i_d}\le 1\}$, for any permutation $(i_1,\dots,i_d)$ of $[n]$. Let $\Omega$ be the set of all permutations of $[n]$.
\begin{align*}
&\quad\int_{[0,1]^d} \left(\bc^\top \bx+ \frac{\bc^\top\bv_0}{u}\right)^q d\bx=\int_{\frac{\bv_0}{u}+[0,1]^d} \left(\bc^\top \bx\right)^q d\bx \\
&=\sum_{(i_1,\dots,i_d)\in\Omega}\int_{\frac{\bv_0}{u}+\Delta_{i_1,\dots,i_d}} (\bc^\top\bx)^q d\bx\\
&\ge \sum_{(i_1,\dots,i_d)\in\Omega}\frac{\Gamma(q+1)}{\Gamma(q+d+1)}\left(\sum_{j=1}^{d}\left(\sum_{k=j}^d c_{i_k}+\frac{\bc^\top\bv_0}{u}\right)^q+\left(\frac{\bc^\top\bv_0}{u}\right)^q\right)\\
&= \frac{\Gamma(q+1)}{\Gamma(q+d+1)}\left(d!\left(\frac{\bc^\top\bv_0}{u}\right)^q+\sum_{j=1}^{d}\sum_{(i_1,\dots,i_d)\in\Omega}\left(\sum_{k=j}^d c_{i_k}+\frac{\bc^\top\bv_0}{u}\right)^q\right)\\
&\ge \frac{\Gamma(q+1)}{\Gamma(q+d+1)}\left(\sum_{j=1}^{d}\sum_{(i_1,\dots,i_d)\in\Omega}\left(\sum_{k=j}^d c_{i_k}+\frac{d-j+1}{d}\frac{\bc^\top\bv_0}{u}\right)^q\right)\\
&\ge \frac{\Gamma(q+1)d!}{\Gamma(q+d+1)}\sum_{j=1}^{d}\left(\frac{d-j+1}{d}\right)^q\left(\bc^\top\mathbf{1}+\frac{\bc^\top\bv_0}{u}\right)^q,
\end{align*}
where the first inequality follows from \cite[Lemma 4.5]{xuleemulti}, the second to last inequality comes from $1\ge \frac{d-j+1}{d}$ for all $j=1,\dots,d$, and the last inequality follows from the power mean inequality
$(\frac{1}{m}\sum_{j=1}^m x_j^q)^{1/q}\ge \frac{1}{m}\sum_{j=1}^m x_j$ for $\bx>0$ and $q>1$.
\qed
\end{proof}
\begin{remark}
Lemma \ref{lem:c} shows the ratio between $\int_{[0,1]^n}\left(\bc^\top \by+\frac{\bc^\top\bv_0}{u}\right)^q d\by$ and $\left(\bc^\top\mathbf{1}+\frac{\bc^\top\bv_0}{u}\right)^q=\max_{\by\in[0,1]^d}\left(\bc^\top \by+\frac{\bc^\top\bv_0}{u}\right)^q$ has a positive lower bound, which is the key observation to prove Theorem \ref{cx_rat}.

Lemma \ref{lem:c} is not tight. For example, if asymptotically $\bc := c_1 \be_{1}$, $\bv_0:=0$, we have
$$\int_{[0,1]^d} \left(\bc^\top \bx+ \frac{\bc^\top\bv_0}{u}\right)^q d\bx
=\frac{c_1^q}{q+1},$$
and the lower bound is
$$
\frac{\Gamma(q+1)d!}{\Gamma(q+d+1)}\frac{\sum_{j=1}^{d}j^q}{d^q}c_1^q \sim O\left(\frac{1}{q^d}\right)c_1^q.
$$
\end{remark}

\subsection{A class of exponential functions}

We consider $f(\bx) := e^{\bc^\top \bx}-1$. Notice that $f(\mathbf{0})=0$, but $f(\bx)$ is not $q$-homogeneous. We first consider the computations of   $\int_Q f(\bx) d\bx$ and\break  $\int_0^1 z^d\int_Q f(z\bx)d\bx dz$, which are the key parts to compute the volumes of $P(f,Q)$ and $P^0(f,Q)$, respectively.

\begin{proposition}\label{prop:pers_lower_bound_int}
For $Q:=
\left\{\bx=\bA\by+\bb\in\R^d ~:~ \by\in[0,1]^n\right\}$,
where $\bA\in\R^{d\times n}$, $\rank \bA =n\le d$, and $f(\bx) := e^{\bc^\top \bx}-1$, we have
\begin{align*}
    \int_{Q} f(\bx) d\bx  &= \sqrt{\det\left(\bA^\top \bA\right)}e^{\bc^\top \bb}\prod_{j=1}^n \frac{e^{\bc^\top\bA_{:,j}}-1}{\bc^\top \bA_{:,j}} - \sqrt{\det\left(\bA^\top \bA\right)},
\end{align*}
where $\bA_{:, j}$ is the $j$-th column of $\bA$.
In particular, when $Q:=Q_u$ ($\bA=u\mathbf{I}_d$, $\bb=\bv_0$), a unit hypercube scaled by $u$ and translated by $\bv_0$, then $\int_{Q} f(\bx) d\bx = e^{\bc^\top\bv_0}\prod_{j=1}^d \frac{e^{uc_j}-1}{c_j}-u^d$.
\end{proposition}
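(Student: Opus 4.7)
The plan is to reduce the integral to a product of one-dimensional exponential integrals by an affine change of variables and a Fubini-type separation.

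First I would substitute $\bx = \bA\by + \bb$, which has full column rank, so the (unsigned) $n$-dimensional Jacobian factor is $\sqrt{\det(\bA^\top \bA)}$. Splitting $f(\bx) = e^{\bc^\top \bx} - 1$ into its two pieces gives
\begin{align*}
\int_Q f(\bx)\, d\bx
= \sqrt{\det(\bA^\top \bA)} \int_{[0,1]^n} e^{\bc^\top(\bA\by + \bb)}\, d\by \;-\; \sqrt{\det(\bA^\top \bA)},
\end{align*}
since the second term is just $\vol(Q) = \sqrt{\det(\bA^\top \bA)}$. Next I would pull $e^{\bc^\top \bb}$ out and rewrite $e^{\bc^\top \bA \by} = \prod_{j=1}^n e^{(\bc^\top \bA_{:,j})\, y_j}$, so that Fubini separates the integral into
\begin{align*}
\int_{[0,1]^n} e^{\bc^\top \bA \by}\, d\by = \prod_{j=1}^n \int_0^1 e^{(\bc^\top \bA_{:,j})\, y_j}\, dy_j = \prod_{j=1}^n \frac{e^{\bc^\top \bA_{:,j}}-1}{\bc^\top \bA_{:,j}}.
\end{align*}
Combining these pieces yields the general formula. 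For the special case $\bA = u\mathbf{I}_d$, $\bb = \bv_0$, I would plug in $\sqrt{\det(\bA^\top \bA)} = u^d$ and $\bc^\top \bA_{:,j} = u c_j$, so the factor of $u$ in the numerator of each $(e^{u c_j}-1)/(u c_j)$ cancels against one factor of $u$ in $u^d$, producing exactly $e^{\bc^\top \bv_0}\prod_{j=1}^d (e^{u c_j} - 1)/c_j - u^d$.

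There is really no hard step here — the only subtlety is the usual removable-singularity issue when some $\bc^\top \bA_{:,j} = 0$, in which case the corresponding factor $(e^t - 1)/t$ is to be read as its limit $1$ (equivalently, just evaluate $\int_0^1 1\, dy_j = 1$ directly). I would include a brief parenthetical remark to this effect so the formula is unambiguous, but otherwise the argument is a straightforward change of variables plus Fubini.
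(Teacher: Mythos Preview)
Your proposal is correct and follows essentially the same route as the paper: an affine change of variables with Jacobian $\sqrt{\det(\bA^\top\bA)}$, followed by pulling out $e^{\bc^\top\bb}$ and iterated (Fubini) integration of the separable exponential over $[0,1]^n$, then specialization to $\bA=u\mathbf{I}_d$, $\bb=\bv_0$. Your added remark on the removable singularity when $\bc^\top\bA_{:,j}=0$ is a nice clarification that the paper omits.
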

\begin{proof}
We have
\begin{align*}
    \int_{Q} f(\bx) d\bx  &= \sqrt{\det(\bA^\top \bA)}\int_{[0,1]^n} f(\bA \by+\bb)d\by\\
    &=\sqrt{\det(\bA^\top \bA)}e^{\bc^\top \bb}\int_{[0,1]^n} e^{\bc^\top\bA \by} d\by - \sqrt{\det(\bA^\top \bA)}.
\end{align*}
Then we iteratively integrate on the $y_j$, and obtain
\begin{align*}
    \int_{Q} f(\bx) d\bx
    &=\sqrt{\det(\bA^\top \bA)}e^{\bc^\top \bb}\prod_{j=1}^n \frac{e^{\bc^\top\bA_{:,j}}-1}{\bc^\top \bA_{:,j}} - \sqrt{\det(\bA^\top \bA)}.
\end{align*}
Thus, $\int_{Q_u}f(\bx)d\bx = e^{\bc^\top\bv_0}\prod_{j=1}^d \frac{e^{uc_j}-1}{c_j}-u^d$.
\qed\end{proof}

As in Proposition \ref{prop:two_methods} for $\left(\bc^\top\bx\right)^q$, another way to compute $\int_{[0,1]^n} e^{\bc^\top\bA \by} d\by$ is to triangulate the hypercube $[0,1]^n$.
However, the triangulation method does not provide us with a simpler formula as in Remark \ref{rem:all_one} or a lower bound as in Lemma \ref{lem:c}.
In the Appendix, we verify that the result obtained from the triangulation method is the same under a simple genericity assumption.

Unlike the case of homogeneous functions,
we need a further integration over $z$ to compute $ \int_0^1 z^d\int_Q f(z\bx)d\bx dz$ for the na\"{i}ve relaxation. By Proposition \ref{prop:pers_lower_bound_int}, we can obtain the following results.
\begin{proposition}\label{prop:naive_exp}
For $Q:=\{\bx=\bA\by+\bb\in\R^d ~:~ \by\in[0,1]^n\}$,
where $\bA\in\R^{d\times n}$, $\rank \bA =n\le d$, and $f(\bx) := e^{\bc^\top \bx}-1$, we have
\begin{align*}
     &
     \int_0^1 z^d\int_Q f(z\bx)d\bx dz = \int_{0}^{1} z^d \sqrt{\det\left(\bA^\top \bA\right)}e^{z\bc^\top \bb}\prod_{j=1}^n \frac{e^{z\bc^\top\bA_{:,j}}-1}{z\bc^\top \bA_{:,j}} dz -  \frac{\sqrt{\det\left(\bA^\top \bA\right)}}{d+1}.
\end{align*}
In particular, when $Q:=Q_u$ ($\bA:=u\mathbf{I}_d$, $\bb:=\bv_0$), then
$$
\int_0^1 z^d\int_Q f(z\bx)d\bx dz = \frac{1}{\prod_{j=1}^d c_j}\int_0^1 e^{z\bc^\top\bv_0}\prod_{j=1}^d\left(e^{zuc_j}-1\right)dz - \frac{u^d}{d+1}.
$$
\end{proposition}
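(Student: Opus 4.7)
The plan is to view this as a direct consequence of Proposition \ref{prop:pers_lower_bound_int}, with the trivial observation that replacing $\bc$ by $z\bc$ inside $f$ is the same as integrating $f(z\bx)$ over $\bx\in Q$. Concretely, I would first fix $z\in(0,1]$ and apply Proposition \ref{prop:pers_lower_bound_int} with the coefficient vector $z\bc$ in place of $\bc$. This yields
\begin{align*}
\int_Q f(z\bx)\,d\bx
&= \sqrt{\det(\bA^\top\bA)}\, e^{z\bc^\top\bb}\prod_{j=1}^n \frac{e^{z\bc^\top\bA_{:,j}}-1}{z\bc^\top\bA_{:,j}} - \sqrt{\det(\bA^\top\bA)}.
\end{align*}

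Next I would multiply by $z^d$ and integrate over $z\in[0,1]$. The first term is just carried as-is (there is no simpler closed form in general, so it remains as an integral over $z$); the second term is a constant in $z$, so
\begin{align*}
\int_0^1 z^d \sqrt{\det(\bA^\top\bA)}\,dz = \frac{\sqrt{\det(\bA^\top\bA)}}{d+1},
\end{align*}
which produces the second summand in the claimed formula. Combining these two pieces gives the first displayed equation of the proposition.

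For the special case $Q=Q_u$, I would substitute $\bA=u\mathbf{I}_d$, $\bb=\bv_0$, so $\sqrt{\det(\bA^\top\bA)}=u^d$, $\bc^\top\bA_{:,j}=uc_j$, and simplify
\begin{align*}
z^d\, u^d \prod_{j=1}^d \frac{e^{zuc_j}-1}{zuc_j} = \frac{1}{\prod_{j=1}^d c_j}\prod_{j=1}^d\bigl(e^{zuc_j}-1\bigr),
\end{align*}
after which the formula follows from the general case.

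There is essentially no obstacle here: the only mild point to check is that the apparent singularity at $z=0$ in the factor $\prod_j \frac{e^{z\bc^\top\bA_{:,j}}-1}{z\bc^\top\bA_{:,j}}$ is removable (each factor has limit $1$ as $z\to 0$), so the integrand extends continuously to $[0,1]$ and the integral is well-defined; this is the same reason the unsimplified integrand $z^d\prod_j\frac{e^{zuc_j}-1}{zuc_j}$ is absorbed cleanly into $\frac{1}{\prod c_j}\prod_j(e^{zuc_j}-1)$ after canceling the $z^d u^d$.
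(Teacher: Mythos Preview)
Your proposal is correct and follows exactly the approach the paper takes: the paper simply states that the result follows from Proposition~\ref{prop:pers_lower_bound_int} via one further integration in $z$, and you have written out precisely those details, including the substitution $\bc\mapsto z\bc$ for the inner integral and the elementary evaluation of the constant term. Your remark on the removable singularity at $z=0$ is a nice addition that the paper does not make explicit.
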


\begin{corollary}\label{cor:diff_exp}
For $f(\bx) := e^{\bc^\top \bx}-1$ and $Q:=Q_u$, we have
\begin{align*}
  \Delta(P^0,P)~=~&
    \frac{1}{\prod_{j=1}^d c_j}\left(\frac{e^{\bc^\top\bv_0}\prod_{j=1}^d\left(e^{uc_j}-1\right)}{d+2} - \int_{0}^1 e^{z\bc^\top\bv_0}\prod_{j=1}^d\left(e^{zuc_j}-1\right)dz\right)\\
    &\quad +\frac{u^d}{(d+1)(d+2)}.
\end{align*}
\end{corollary}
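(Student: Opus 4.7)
The plan is to simply combine the general identity of Corollary \ref{cor:diff} with the two explicit integral formulas already computed for this specific $f$ and $Q$, namely Proposition \ref{prop:pers_lower_bound_int} (which handles $\int_Q f(\bx)\,d\bx$) and Proposition \ref{prop:naive_exp} (which handles $\int_0^1 z^d \int_Q f(z\bx)\,d\bx\,dz$). Since each of those propositions was proved by an affine change of variables followed by iterated one-dimensional integration on the $y_j$'s, the only work left at this stage is algebraic.

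First I would specialize Corollary \ref{cor:diff} to $Q=Q_u$ to obtain
\[
\Delta(P^0,P)=\frac{1}{d+2}\int_{Q_u} f(\bx)\,d\bx \;-\;\int_0^1 z^d\int_{Q_u} f(z\bx)\,d\bx\,dz.
\]
Then I would substitute the $Q_u$-specializations from Propositions \ref{prop:pers_lower_bound_int} and \ref{prop:naive_exp}, namely
\[
\int_{Q_u} f(\bx)\,d\bx=\frac{e^{\bc^\top\bv_0}\prod_{j=1}^d\bigl(e^{uc_j}-1\bigr)}{\prod_{j=1}^d c_j}-u^d,
\]
and
\[
\int_0^1 z^d\!\int_{Q_u} f(z\bx)\,d\bx\,dz=\frac{1}{\prod_{j=1}^d c_j}\int_0^1 e^{z\bc^\top\bv_0}\prod_{j=1}^d\bigl(e^{zuc_j}-1\bigr)dz-\frac{u^d}{d+1}.
\]

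Finally I would group the two terms that depend on $\prod_{j=1}^d c_j$ into the displayed parenthesized expression, and collect the remaining $u^d$ contributions. The only arithmetic that needs to be done is
\[
-\frac{u^d}{d+2}+\frac{u^d}{d+1}=\frac{u^d}{(d+1)(d+2)},
\]
which yields exactly the trailing term in the claimed formula. There is no real obstacle here; the proposition is a one-line consequence of the three preceding results, and the only thing to watch is the sign bookkeeping when moving the isolated $u^d$ pieces coming from Propositions \ref{prop:pers_lower_bound_int} and \ref{prop:naive_exp} out of the main bracketed expression so that the stated formula involves only the ``perspective-like'' and ``na\"ive-like'' exponential integrals inside the parentheses.
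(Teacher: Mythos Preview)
Your proposal is correct and matches the paper's intended derivation: the paper states Corollary~\ref{cor:diff_exp} immediately after Proposition~\ref{prop:naive_exp} without an explicit proof, because it is exactly the substitution of Propositions~\ref{prop:pers_lower_bound_int} and~\ref{prop:naive_exp} into Corollary~\ref{cor:diff}, followed by the arithmetic $-\frac{u^d}{d+2}+\frac{u^d}{d+1}=\frac{u^d}{(d+1)(d+2)}$ that you spelled out.
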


\section{Cut-off ratio}\label{sec:cut_off}

As in \cite{xuleemulti}, we consider the asymptotic behavior of the cut-off ratio $\frac{\Delta(P^0,P)}{\vol(P^0(f,Q))}$ on the scaled box $Q_u$ to compare the two relaxations.
We extend the results of the different asymptotic behaviors for two classes of functions from the simplex case to the box case: (1) For the exponential function, the na\"{i}ve relaxation is close to the perspective relaxation in terms of the volume. (2) For the power of linear forms, the na\"{i}ve relaxation has a positive gap compared with the perspective relaxation.
Furthermore, we also obtain a sufficient condition for a nonnegative convex function $f$ to have its na\"{i}ve relaxation approaching its perspective relaxation asymptotically. This extends the results to a wider range of functions.

Although the cut-off amount $\Delta(P^0,P)$ is independent of the upper bound $\mu$, the denominator $\vol(P^0(f,Q))$ depends on $\mu$.
If the upper bound is very large, then $\vol(P^0(f,Q))$ will increase significantly, and the cut-off ratio between the two relaxations will be small. To ensure a fair comparison, we want to find a small concave upper bound to maintain the convexity of the relaxation and avoid the irrelevant parts when calculating the cut-off ratio.
In the simplex case, the smallest concave upper bound is a hyperplane that passes through all the vertices. However, the situation is more complex for the box case.
Thus, before we proceed with generalizing our findings, we first analyze the asymptotic behavior of the cut-off ratio against two natural upper bounds. Our analysis reveals that their asymptotic behavior differs only by a constant factor.
We also use $P^0(f,Q,\mu)$ to represent the na\"{i}ve relaxation $P^0(f,Q)$ to emphasize the dependence on the concave upper bound $\mu$.

One natural bound is the smallest concave upper bound for $f(x)$.
The \emph{concave envelope} of $f(x)$, denoted as $\mathrm{conc}(f)$, is defined as the smallest concave upper bound for $f(x)$ \cite[Definition 2.1]{tawarmalaniExplicitConvexConcave2013}. A function $f(x): S\mapsto \mathbb{R}$ is said to be \emph{supermodular} if $f(\bx\vee \by) + f(\bx \wedge \by)\ge f(\bx)+f(\by)$ for all $\bx,\by\in S$, where $\bx\vee\by$ and $\bx\wedge\by$ denotes the elementwise maximum and minimum of $\bx$ and $\by$ (see \cite{topkis1998supermodularity}, for example).

\begin{theorem}{\cite[Theorem 3.3, Lemma 3.2, Corollary 2.7]{tawarmalaniExplicitConvexConcave2013}}
If $f:[0,1]^d \mapsto \mathbb{R}$ is supermodular when restricted to $\{0,1\}^d$, then the concave envelope of $f$ over $[0,1]^d$ is given by the piecewise linear function through Kuhn's triangulation $\Delta_{i_1,\dots,i_d}:=\{\bx: 0\le x_{i_1}\le\dots \le x_{i_d}\le 1\}$, for any permutation $(i_1,\dots,i_d)$ of $[d]$, i.e.,
$$
\nu(\bx) = \bg_{i_1,\dots,i_d}^\top \bB_{i_1,\dots,i_d}^{-1} \bx + f(\mathbf{0}), \quad\forall \bx\in \Delta_{i_1,\dots,i_n}\,,
$$
where
\begin{align*}
&\bg_{i_1,\dots,i_d}^\top := [f(\bw_{i_1})-f(\mathbf{0}),\dots,f(\bw_{i_d})-f(\mathbf{0})],\\ &\bB:=[\bw_{i_1},\dots,\bw_{i_d}], \\
&\bw_{i_k} :=\textstyle \sum_{j=1}^k \be_{i_j} \mbox{ for } k\in[d].
\end{align*}
\end{theorem}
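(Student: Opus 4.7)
The plan is to verify the three defining properties of the concave envelope: (i) $\nu\ge f$ pointwise on $[0,1]^d$, (ii) $\nu$ is concave on $[0,1]^d$, and (iii) every concave $\mu\ge f$ on $[0,1]^d$ satisfies $\mu\ge\nu$. In the paper's setting $f$ is also convex, so (i) is automatic: on each Kuhn simplex $\nu$ is the affine interpolant of the vertex values of $f$ and therefore dominates $f$ by Jensen's inequality. Property (iii) is a one-line argument: writing any $\bx\in\Delta_{i_1,\dots,i_d}$ as a convex combination of $\mathbf{0},\bw_{i_1},\dots,\bw_{i_d}$ and applying concavity of $\mu$ together with $\mu\ge f$ at those vertices gives $\mu(\bx)\ge\nu(\bx)$.

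The substance is in (ii). Because $\nu$ is affine on each cell of the triangulation, its concavity on $[0,1]^d$ reduces to a local check across each interior $(d{-}1)$-face: the affine extension of $\nu$ from one adjacent simplex must dominate $\nu$ on the other. Kuhn's triangulation has the convenient feature that two simplices are adjacent iff their defining permutations differ by a single adjacent transposition, say $\sigma=(\ldots,i_k,i_{k+1},\ldots)$ and $\sigma'=(\ldots,i_{k+1},i_k,\ldots)$. Setting $A:=\sum_{j<k}\be_{i_j}$, $B:=A+\be_{i_k}$, $C:=A+\be_{i_{k+1}}$, and $D:=A+\be_{i_k}+\be_{i_{k+1}}$, the points $A,D$ lie on the shared face while $B$ is a vertex only of $\Delta_\sigma$ and $C$ only of $\Delta_{\sigma'}$. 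The identity $B+C=A+D$ together with linearity of the affine extension $\ell_\sigma$ of $\nu\vert_{\Delta_\sigma}$ yields $\ell_\sigma(C)=f(A)+f(D)-f(B)$, so the concavity test $\ell_\sigma(C)\ge\nu(C)=f(C)$ becomes $f(A)+f(D)\ge f(B)+f(C)$, which is exactly the supermodularity inequality at the incomparable pair $\{B,C\}$ with meet $A$ and join $D$.

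The principal obstacle is pinning down which combinatorial inequalities on $\{0,1\}^d$ need to be tested to certify concavity. The reason the theorem works is that Kuhn's triangulation is engineered so that its interior $(d{-}1)$-faces are in bijection with adjacent-transposition swaps, and each such swap produces exactly one covering square $\{A,B,C,D\}\subset\{0,1\}^d$. Hence the global concavity of $\nu$ collapses to the family of pairwise supermodularity inequalities on $\{0,1\}^d$, with no higher-order conditions to verify.
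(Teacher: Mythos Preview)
The paper does not prove this theorem; it is quoted from \cite{tawarmalaniExplicitConvexConcave2013} and used as a black box. Your argument is a correct self-contained proof. The reduction of global concavity of $\nu$ to the local test across each interior $(d{-}1)$-face, the identification of adjacent Kuhn cells with adjacent transpositions, and the collapse of each face-test to a single supermodularity inequality on the square $\{A,B,C,D\}$ with $B\wedge C=A$, $B\vee C=D$ is exactly the mechanism underlying the cited result, and your execution of it is clean.

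One point you flagged is worth making explicit: the theorem as restated in the paper is slightly abbreviated. Supermodularity on $\{0,1\}^d$ alone says nothing about $f$ in the interior of the cube, so part~(i) cannot follow from that hypothesis; you correctly supply convexity of $f$ (valid in all of the paper's applications) to get $\nu\ge f$ via Jensen on each simplex. In the cited source this is handled by the separate hypothesis that the concave envelope of $f$ is \emph{vertex polyhedral} (their Corollary~2.7), which convexity implies. So your proof is not merely correct for ``the paper's setting'' but in fact recovers the full cited statement once the missing vertex-polyhedral/convexity hypothesis is restored.
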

\begin{corollary}\label{cor:triang_u}
For $f(\bx) = e^{\bc^\top \bx}-1 (\bc\ge0)$ or $f(\bx)=(\bc^\top \bx)^q (q>1, \bc\ge0)$, the concave envelope of $f$ over $Q_u=\bv_0+u[0,1]^d$ ($\bv_0>0,u>0$) is given by the piecewise linear function through the triangulation $\bv_0+u\Delta_{i_1,\dots,i_d}$, where $\Delta_{i_1,\dots,i_d}:=\{\bx: 0\le x_{i_1}\le\dots \le x_{i_d}\le 1\}$, for any permutation $(i_1,\dots,i_d)$ of $[d]$, i.e.,
$$
\mathrm{conc}(f)(\bx) = \bg_{i_1,\dots,i_d}^\top \bB_{i_1,\dots,i_d}^{-1} (\bx-\bv_0) + f(\bv_0), \quad\forall \bx\in\bv_0+u \Delta_{i_1,\dots,i_n},
$$
where
\begin{align*}
&\bg_{i_1,\dots,i_d}^\top := [f(\bw_{i_1})-f(\bv_0),\dots,f(\bw_{i_d})-f(\bv_0)],\\ &\bB:=[\bw_{i_1}-\bv_0,\dots,\bw_{i_d}-\bv_0], \\
&\bw_{i_k} :=\textstyle \bv_0+u\sum_{j=1}^k \be_{i_j} \mbox{ for } k\in[d].
\end{align*}
\end{corollary}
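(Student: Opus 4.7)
The plan is to reduce to the preceding theorem of Tawarmalani--Richard--Chung by the affine change of variables $\bx=\bv_0+u\by$, which carries $[0,1]^d$ bijectively onto $Q_u$. Since the concave envelope commutes with affine reparameterizations of the domain, it suffices to verify that $\tilde{f}(\by):=f(\bv_0+u\by)$ restricted to $\{0,1\}^d$ is supermodular for each of the two families; then the cited theorem produces the concave envelope as a piecewise-linear function on Kuhn's triangulation of $[0,1]^d$, and pulling back yields the claimed formula on $Q_u$.

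For the supermodularity, I would write both families in the unified form $f(\bx)=\phi(\bc^\top\bx)$, where either $\phi(t)=e^t-1$ or $\phi(t)=t^q$ with $q>1$; in both cases $\phi$ is convex and nondecreasing on $\R_{\geq 0}$. For any $\bx,\by\in\{0,1\}^d$ the coordinatewise identity $x_i\vee y_i+x_i\wedge y_i=x_i+y_i$ gives $\bc^\top(\bx\vee\by)+\bc^\top(\bx\wedge\by)=\bc^\top\bx+\bc^\top\by$, while the hypothesis $\bc\geq 0$ yields $\bc^\top(\bx\vee\by)\geq\max\{\bc^\top\bx,\bc^\top\by\}$ and $\bc^\top(\bx\wedge\by)\leq\min\{\bc^\top\bx,\bc^\top\by\}$. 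Thus the pair $\bigl(\bc^\top(\bx\vee\by),\bc^\top(\bx\wedge\by)\bigr)$ majorizes $\bigl(\bc^\top\bx,\bc^\top\by\bigr)$, and convexity of $\phi$ then gives $\phi(\bc^\top(\bx\vee\by))+\phi(\bc^\top(\bx\wedge\by))\geq\phi(\bc^\top\bx)+\phi(\bc^\top\by)$. An additive translation by the constant $\bc^\top\bv_0$ does not affect the inequality, so $\tilde{f}$ is supermodular on $\{0,1\}^d$, and the hypothesis of the theorem is met.

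The final step is bookkeeping: substituting $\by=(\bx-\bv_0)/u$ into the Tawarmalani--Richard--Chung formula for $\mathrm{conc}(\tilde f)$ on $\Delta_{i_1,\dots,i_d}$ converts the cumulative-sum matrix $\tilde{\bB}=[\tilde{\bw}_{i_1},\ldots,\tilde{\bw}_{i_d}]$ with $\tilde{\bw}_{i_k}=\sum_{j=1}^k\be_{i_j}$ into $\bB=u\tilde{\bB}=[\bw_{i_1}-\bv_0,\ldots,\bw_{i_d}-\bv_0]$ (with the factor $1/u$ from the chain rule absorbed into $\bB^{-1}$), and the gradient entries $\tilde{f}(\tilde{\bw}_{i_k})-\tilde{f}(\mathbf{0})$ become $f(\bw_{i_k})-f(\bv_0)$, exactly reproducing the displayed expressions. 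The simplex $\Delta_{i_1,\dots,i_d}\subset[0,1]^d$ pulls back to $\bv_0+u\Delta_{i_1,\dots,i_d}\subset Q_u$ as required. The only nontrivial step is the supermodularity verification above; once that is in hand the rest is an affine change of variables, and the majorization-plus-convexity argument disposes of both families simultaneously without any case split.
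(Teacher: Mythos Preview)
Your proof is correct and follows the same overall architecture as the paper: pull back to $[0,1]^d$ by the affine map $\bx=\bv_0+u\by$, verify that $\tilde f(\by)=f(\bv_0+u\by)$ is supermodular, and then invoke the Tawarmalani--Richard--Chung theorem; the formula on $Q_u$ is recovered by the bookkeeping you describe.

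The only difference is in how supermodularity is checked. The paper verifies the second-order criterion $\partial^2 \tilde f/\partial y_i\partial y_j\ge 0$ for $i\ne j$ (citing Topkis), and separately remarks that one could instead invoke the general fact that an increasing convex function composed with an increasing linear form is supermodular. Your majorization argument is precisely a self-contained proof of that composition fact specialized to the present situation: from $\bc\ge 0$ you get that $\bigl(\bc^\top(\bx\vee\by),\bc^\top(\bx\wedge\by)\bigr)$ majorizes $\bigl(\bc^\top\bx,\bc^\top\by\bigr)$, and convexity of $\phi$ then yields the supermodular inequality. This is a slightly more elementary route---no differentiability is used, and both families $\phi(t)=e^t-1$ and $\phi(t)=t^q$ are handled in one stroke---whereas the paper's mixed-partial check is marginally quicker to write down but needs a separate calculation for each $\phi$.
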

\begin{proof}
It is easy to verify that both $f\left(\bv_0+u\bx\right)=e^{u\bc^\top\bx+\bc^\top\bv_0}-1$ and\break $f\left(\bv_0+u\bx\right)=(u\bc^\top\bx+\bc^\top\bv_0)^q$ are supermodular on $[0,1]^d$ by checking that $\frac{\partial^2 f}{\partial x_i \partial x_j}\ge0$ for $i\ne j$ (see \cite[Section 2.6.1]{topkis1998supermodularity}).
\qed
\end{proof}

The supermodularity of $e^{u\bc^\top \bx+\bc^\top\bv_0}-1$ $(\bc\ge0,\bv>0)$ and $(u\bc^\top \bx+\bc^\top\bv_0)^q$ $(q>1, \bc\ge0,\bv>0)$ can also be seen from the fact that a composition of an increasing convex function and an increasing supermodular function (here we use a linear function) is  supermodular.

Next, we consider two natural upper bound functions $\mu$ and compare the asymptotic behavior of the cut-off ratio for $f(\bx):=e^{\bc^\top \bx}-1$ $(\bc> 0)$ on a scaled box $Q_u$: (1) $\mu$ is a best constant upper bound $F:=\max_{\bx\in Q}\{f(\bx)\}$; (2) $\mu$ is $\mathrm{conc}(f)$. In both cases, we will demonstrate that the cut-off ratio goes to 0 like a constant times $u^{-d}$, with the constant for the trivial upper bound exceeding the constant for the optimal concave upper bound by a factor of exactly $d+1$.
In fact, we can conclude that for any upper bounding function in between these two,
the asymptotic behavior of the cut-off ratio  is between these
two asymptotic behaviors.
This implies that for the exponential function $f(\bx):=e^{\bc^\top \bx}-1$ $(\bc> 0)$, the na\"{i}ve relaxation is close to the perspective relaxation asymptotically.
\begin{lemma}\label{lem:two_mu}
Let $f(\bx) := e^{\bc^\top \bx}-1$ $(\bc> 0)$, $Q_u:=\bv_0+u[0,1]^d$ ($\bv_0>0,u>0$), $F:=\max_{\bx\in Q_u} \{f(\bx)\} = f(\bv_0+u\mathbf{1})= e^{\bc^\top\bv_0}e^{u\bc^\top\mathbf{1}}-1$. Then
\begin{align*}
    &\lim_{u\rightarrow\infty} \frac{\vol(P^0(f,Q_u,F))}{\vol(P^0(f,Q_u,\mathrm{conc}(f)))} = d+1.
\end{align*}
\end{lemma}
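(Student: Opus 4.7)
The plan is to apply Theorem \ref{thm:naivevol} to both volumes and observe that the term $G(u) := \int_0^1 z^d \int_{Q_u} f(z\bx)\,d\bx\,dz$ is independent of $\mu$, so $\vol(P^0(f,Q_u,\mu)) = \tfrac{1}{d+2}\int_{Q_u}\mu(\bx)\,d\bx - G(u)$. Setting $A(u) := \int_{Q_u} F\,d\bx = u^d(e^{\bc^\top\bv_0}e^{u\bc^\top\mathbf{1}}-1)$ and $B(u):=\int_{Q_u}\mathrm{conc}(f)\,d\bx$, the target ratio becomes
\[
\frac{\vol(P^0(f,Q_u,F))}{\vol(P^0(f,Q_u,\mathrm{conc}(f)))} = \frac{A(u)-(d+2)G(u)}{B(u)-(d+2)G(u)},
\]
so it suffices to prove (i) $A(u)/B(u)\to d+1$ and (ii) $G(u)/A(u)\to 0$ as $u\to\infty$; dividing numerator and denominator by $A(u)$ and passing to the limit then gives the claimed value $d+1$.

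For (i), I would use Corollary \ref{cor:triang_u}: $\mathrm{conc}(f)$ is affine on each Kuhn sub-simplex $\bv_0+u\Delta_{i_1,\ldots,i_d}$, which has volume $u^d/d!$, and the integral of an affine function over a $d$-simplex equals the simplex volume times the mean of its $d+1$ vertex values. Summing over all $d!$ permutations and regrouping contributions by the vertex $\bs\in\{0,1\}^d$ of the unit hypercube, using the standard count that each such $\bs$ is a vertex of exactly $|\bs|!(d-|\bs|)!$ of the Kuhn sub-simplices, gives
\[
B(u) = \frac{u^d}{d!(d+1)}\sum_{\bs\in\{0,1\}^d}|\bs|!\,(d-|\bs|)!\bigl(e^{\bc^\top\bv_0+u\bc^\top\bs}-1\bigr).
\]
Since $\bc>0$, the unique exponentially dominant term as $u\to\infty$ corresponds to $\bs=\mathbf{1}$ with weight $d!\cdot 0! = d!$, so $B(u)\sim \tfrac{u^d}{d+1}e^{\bc^\top\bv_0}e^{u\bc^\top\mathbf{1}}$, while clearly $A(u)\sim u^d e^{\bc^\top\bv_0}e^{u\bc^\top\mathbf{1}}$, yielding the factor $d+1$.

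For (ii), Proposition \ref{prop:naive_exp} gives $G(u) = \tfrac{1}{\prod_j c_j}\int_0^1 e^{z\bc^\top\bv_0}\prod_{j=1}^d(e^{zuc_j}-1)\,dz - \tfrac{u^d}{d+1}$. Noting that $G(u)\ge 0$ (since $f\ge 0$ on the relevant domain) and bounding $\prod_j(e^{zuc_j}-1)\le e^{zu\bc^\top\mathbf{1}}$, the one-dimensional integral is at most $\int_0^1 e^{z(\bc^\top\bv_0+u\bc^\top\mathbf{1})}\,dz$, which is $O(u^{-1}e^{u\bc^\top\mathbf{1}})$. Hence $G(u)/A(u) = O(u^{-d-1})\to 0$, completing the proof. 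The main obstacle is the combinatorial bookkeeping in step (i): correctly counting the multiplicity with which each $\{0,1\}$-vertex appears across the $d!$ Kuhn sub-simplices, and cleanly isolating the single exponentially dominant contribution from among the $2^d$ vertex terms; step (ii) is then a routine asymptotic estimate.
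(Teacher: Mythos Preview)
Your proposal is correct and follows essentially the same approach as the paper: both apply Theorem~\ref{thm:naivevol}, compute $\int_{Q_u}\mathrm{conc}(f)$ via Kuhn's triangulation with the vertex-averaging formula (the paper cites this as \cite[Lemma~2.1]{xuleemulti}) to obtain the identical sum $\frac{u^d}{(d+1)!}\sum_{\bs\in\{0,1\}^d}|\bs|!(d-|\bs|)!f(\bv_0+u\bs)$, isolate the dominant $\bs=\mathbf{1}$ term, and show the $G(u)$ contribution is asymptotically negligible. The only minor variation is that in step~(ii) you use a clean upper bound $\prod_j(e^{zuc_j}-1)\le e^{zu\bc^\top\mathbf{1}}$ yielding $G(u)=O(u^{-1}e^{u\bc^\top\mathbf{1}})$, whereas the paper extracts the sharper asymptotic $G(u)\sim F/(u^d\prod_j c_j^2)$ from Proposition~\ref{prop:naive_exp}; both suffice.
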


\begin{proof}
By Theorem \ref{thm:naivevol}, we need to compute $\frac{1}{d+2}\int_{Q_u}\mu(\bx)d\bx$ for the two upper bounds.
For the upper bound $F:=e^{\bc^\top\bv_0}e^{u\bc^\top\mathbf{1}}-1$, we have
$$
\frac{1}{d+2}\int_{Q_u} F d\bx = \frac{u^d F}{d+2}.
$$
Let $\nu:=\mathrm{conc}(f)$, and let $\Omega$ be the set of all permutations of $[d]$.
Using the triangulation in Corollary \ref{cor:triang_u}, we have
$$\frac{1}{d+2}\int_{Q_u} \nu(\bx) d\bx=\frac{1}{d+2}\sum_{(i_1,\dots,i_d)\in\Omega}\int_{\bv_0+u\Delta_{i_1,\dots,i_d}} \nu(\bx) d\bx.
$$
By Lemma 2.1 in \cite{xuleemulti}, we have
\begin{align*}
 &\frac{1}{d+2}\sum_{(i_1,\dots,i_d)\in\Omega}\int_{\bv_0+u\Delta_{i_1,\dots,i_d}} \nu(\bx) d\bx\\
 \quad =&\frac{1}{d+2}\sum_{(i_1,\dots,i_d)\in\Omega}\frac{u^d}{(d+1)!}\left(f(\bv_0)+\sum_{k=1}^d f(\bv_0+u\sum_{j=1}^k\be_{i_j})\right)\\
 \quad =&\frac{u^d}{(d+2)!}\sum_{S\subseteq[d]}\abs{S}!(d-\abs{S})!f(\bv_0+u\sum_{j\in S}\be_{j}).
\end{align*}
Therefore, due to $\lim\limits_{u\rightarrow\infty}{f\left(\bv_0+ u\sum_{j\in S}\be_j\right)}/{F}=0$ for every $S\subsetneq[d]$, we have
\begin{align*}
&\quad \quad\frac{1}{d+2}\int_{Q_u} \nu(\bx) d\bx\\
 &\quad =\frac{u^d F}{(d+2)(d+1)}+\frac{u^d}{(d+2)!}\sum_{S\subsetneq[d]}\abs{S}!(d-\abs{S})!f(\bv_0+u\sum_{j\in S}\be_{j})\sim \frac{u^d F}{(d+2)(d+1)}.
\end{align*}
By Proposition \ref{prop:naive_exp}, we collect the highest-order term as $u$ tends to infinity and obtain
\begin{align*}
&\int_0^1 z^d\int_{Q_u} f(z\bx)d\bx dz=\frac{1}{\prod_{j=1}^d c_j}\int_0^1 e^{z\bc^\top\bv_0}\prod_{j=1}^d\left(e^{zuc_j}-1\right)dz - \frac{u^d}{d+1} \sim \frac{F}{u^d\prod_{j=1}^d c_j^2}.
\end{align*}
Thus, the integral $\int_0^1 z^d\int_{Q_u} f(z\bx)d\bx dz$ can be neglected when computing the volume of the na\"{i}ve relaxation as $u$ tends to infinity:
\begin{align*}
\lim_{u\rightarrow \infty}\frac{\int_0^1 z^d\int_{Q_u} f(z\bx)d\bx dz}{F} = 0.
\end{align*}
Then by Theorem \ref{thm:naivevol} and the above result, we have
\begin{align*}
&\lim_{u\rightarrow\infty} \frac{\vol(P^0(f,Q_u,F))}{\vol(P^0(f,Q_u,\mathrm{conc}(f)))}=\lim_{u\rightarrow\infty} \frac{\frac{1}{d+2}\int_{Q_u} F d\bx}{\frac{1}{d+2}\int_{Q_u} \nu(\bx) d\bx}\\
=&\lim_{u\rightarrow \infty}\frac{\frac{u^d F}{d+2}}{\frac{u^d F}{(d+2)(d+1)}}= d+1.    \tag*{\qed}
\end{align*}
\end{proof}
\begin{theorem}\label{exprat}
Let $f(\bx) := e^{\bc^\top \bx}-1$ $(\bc> 0)$, $Q_u:=\bv_0+u[0,1]^d$ ($\bv_0>0,u>0$), $F:=\max_{\bx\in Q_u} \{f(\bx)\} = f(\bv_0+u\mathbf{1})= e^{\bc^\top\bv_0}e^{u\bc^\top\mathbf{1}}-1$. Then
\begin{align*}
    &\lim_{u\rightarrow\infty} u^d\cdot \frac{\Delta(P^0,P)}{\vol(P^0(f,Q_u,\mathrm{conc}(f)))} = \lim_{u\rightarrow\infty} u^d\cdot\frac{(d+1)\Delta(P^0,P)}{\vol(P^0(f,Q_u,F))}=\frac{d+1}{\prod_{j=1}^d c_j}.
\end{align*}
\end{theorem}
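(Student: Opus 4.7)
The plan is to prove the second limit directly, namely $\lim_{u\to\infty} u^d \cdot \Delta(P^0,P)/\vol(P^0(f,Q_u,F)) = 1/\prod_{j=1}^d c_j$, and then transfer to the $\mathrm{conc}(f)$ case using Lemma~\ref{lem:two_mu}, which already gives $\vol(P^0(f,Q_u,F))/\vol(P^0(f,Q_u,\mathrm{conc}(f))) \to d+1$. The factor of $d+1$ is exactly what ties the two expressions in the theorem together, so the two claims are equivalent.

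For the denominator, I would invoke Theorem~\ref{thm:naivevol} to write
\[
\vol(P^0(f,Q_u,F)) = \frac{u^d F}{d+2} - \int_0^1 z^d \int_{Q_u} f(z\bx)\,d\bx\,dz,
\]
and appeal to the estimate already obtained in the proof of Lemma~\ref{lem:two_mu} showing that the second term is $o(u^d F)$; hence $\vol(P^0(f,Q_u,F)) \sim u^d F/(d+2)$. For the numerator, Corollary~\ref{cor:diff_exp} writes $\Delta(P^0,P)$ as three pieces. The leading piece $\frac{e^{\bc^\top\bv_0}\prod_j(e^{uc_j}-1)}{(d+2)\prod_j c_j}$ is asymptotic to $F/((d+2)\prod_j c_j)$, because expanding $\prod_j(e^{uc_j}-1)=\sum_{S\subseteq[d]}(-1)^{d-|S|}e^{u\sum_{j\in S}c_j}$ isolates $S=[d]$ as the unique term of exponential rate $\bc^\top\mathbf{1}$. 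The polynomial piece $u^d/((d+1)(d+2))$ is $o(F)$ since $F$ grows exponentially in $u$. The integral piece $\int_0^1 e^{z\bc^\top\bv_0}\prod_j(e^{zuc_j}-1)\,dz$ is bounded above by
\[
\int_0^1 e^{z(\bc^\top\bv_0 + u\bc^\top\mathbf{1})}\,dz = \frac{e^{\bc^\top\bv_0 + u\bc^\top\mathbf{1}}-1}{\bc^\top\bv_0 + u\bc^\top\mathbf{1}} \sim \frac{F}{u\bc^\top\mathbf{1}},
\]
hence is $O(F/u) = o(F)$. Therefore $\Delta(P^0,P) \sim F/((d+2)\prod_j c_j)$.

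Dividing and multiplying by $u^d$ yields $u^d \cdot \Delta(P^0,P)/\vol(P^0(f,Q_u,F)) \to 1/\prod c_j$; multiplying by $d+1$ gives the second claimed limit, and a final application of Lemma~\ref{lem:two_mu} produces the first. The only truly non-routine step is the integral estimate: one must verify that $\int_0^1 e^{z\bc^\top\bv_0}\prod_j(e^{zuc_j}-1)\,dz$ is asymptotically smaller by a factor of $u$ than the corresponding boundary value $e^{\bc^\top\bv_0}\prod_j(e^{uc_j}-1)$ appearing in the other numerator piece. The elementary bound $e^{zuc_j}-1\le e^{zuc_j}$ collapses the integrand to a single exponential, and the standard Laplace-type evaluation near $z=1$ supplies the missing factor of $1/u$; this is the place where the plan would need to be carried out with genuine care.
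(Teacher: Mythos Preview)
Your proposal is correct and follows essentially the same route as the paper: obtain $\Delta(P^0,P)\sim F/((d+2)\prod_j c_j)$ from Corollary~\ref{cor:diff_exp} by showing the integral and polynomial pieces are $o(F)$, read off $\vol(P^0(f,Q_u,F))\sim u^dF/(d+2)$ from the computations in (the proof of) Lemma~\ref{lem:two_mu}, divide, and then use Lemma~\ref{lem:two_mu} itself to pass to $\mathrm{conc}(f)$. Your elementary bound $\int_0^1 e^{z\bc^\top\bv_0}\prod_j(e^{zuc_j}-1)\,dz\le \int_0^1 e^{z(\bc^\top\bv_0+u\bc^\top\mathbf{1})}\,dz=O(F/u)$ is exactly what is needed and is in fact a cleaner justification than the paper's stated asymptotic $\sim F/(u^d\prod_j c_j^2)$ for $\int_0^1 z^d\int_{Q_u} f(z\bx)\,d\bx\,dz$, whose precise exponent is immaterial since only $o(u^dF)$ is used.
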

\begin{proof}
By Proposition \ref{prop:pers_lower_bound_int} and \ref{prop:naive_exp}, we collect the highest order term as $u$ tends to infinity and obtain
\begin{align*}
&\int_0^1 z^d\int_{Q_u} f(z\bx)d\bx dz=\frac{1}{\prod_{j=1}^d c_j}\int_0^1 e^{z\bc^\top\bv_0}\prod_{j=1}^d(e^{zuc_j}-1)dz - \frac{u^d}{d+1} \sim \frac{F}{u^d\prod_{j=1}^d c_j^2},\\
&\int_{Q_u}f(\bx)d\bx = e^{\bc^\top\bv_0}\frac{\prod_{j=1}^d(e^{uc_j}-1)}{\prod_{j=1}^d c_j}-u^d\sim \frac{F}{\prod_{j=1}^d c_j}.
\end{align*}
Thus,
\begin{align*}
\lim_{u\rightarrow \infty}\frac{\int_0^1 z^d\int_{Q_u} f(z\bx)d\bx dz}{\int_{Q_u}f(\bx)d\bx} = 0.
\end{align*}
Then by Corollary \ref{cor:diff_exp} and the above result, we have
\begin{align*}
    \Delta(P^0,P)
    &\sim \frac{1}{d+2}\int_{Q_u} f(\bx)d\bx \sim \frac{F}{(d+2)\prod_{j=1}^d c_j}.
\end{align*}
Therefore, by Lemma \ref{lem:two_mu},
\begin{align*}
    &\lim_{u\rightarrow\infty} u^d\cdot \frac{\Delta(P^0,P)}{\vol(P^0(f,Q_u,\mathrm{conc}(f)))} = \lim_{u\rightarrow\infty} u^d\cdot\frac{(d+1)\Delta(P^0,P)}{\vol(P^0(f,Q_u,F))}=\frac{d+1}{\prod_{j=1}^d c_j}. \tag*{\qed}
\end{align*}
\end{proof}
Theorem \ref{exprat} extends \cite[Theorem 4.13(b)]{xuleemulti} from a scaled simplex to a scaled hypercube.

Now, we analyze the asymptotic behavior of the cut-off ratio for $f(\bx):=(\bc^\top \bx)^q$ $(\bc> 0, q>1)$ on a scaled box $Q_u$\thinspace, for the best constant upper bound $F:=\max_{\bx\in Q}\{f(\bx)\}$. Because $\vol(P^0(f,Q_u,F))\ge\vol(P^0(f,Q_u,\mathrm{conc}(f)))$, we know that the asymptotic cut-off ratio for the upper bound $\mathrm{conc}(f)$ is lower bounded by the cut-off ratio for the upper bound $F$.
We will demonstrate that the cut-off ratio has a positive lower bound as $u$ tends to infinity, which has  similar behavior to the simplex case in \cite{xuleemulti}.
This positive gap implies that for the power of linear forms $f(\bx):=(\bc^\top \bx)^q$ $(\bc> 0, q>1)$, the na\"{i}ve relxation is not close to the perspective relaxation asymptotically.

\begin{theorem}\label{cx_rat}
Let $f(\bx) := (\bc^\top \bx)^q$ $(\bc> 0, q>1)$, $Q_u:=\bv_0+u[0,1]^d$ ($\bv_0>0,u>0$), $F:=\max_{\bx\in Q_u} \{f(\bx)\} = f(u\mathbf{1})= (u\bc^\top\mathbf{1}+\bc^\top\bv_0)^q$. Then
\begin{align*}
    &\lim_{u\rightarrow\infty}\frac{\Delta(P^0,P)}{\vol(P^0(f,Q_u,F))}\ge\frac{(q-1)\frac{\Gamma(q+1)d!}{\Gamma(q+d+1)}\frac{\sum_{j=1}^{d}j^q}{d^q}}{(q+d+1)-(d+2)\frac{\Gamma(q+1)d!}{\Gamma(q+d+1)}\frac{\sum_{j=1}^{d}j^q}{d^q}}.
\end{align*}
\end{theorem}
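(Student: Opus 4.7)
The plan is to reduce the whole ratio to a rational function of the single scalar quantity $R(u) := \int_{Q_u} f(\bx)\,d\bx / (u^d F)$, bound $R(u)$ uniformly below using Lemma \ref{lem:c}, and finish via monotonicity of the rational function.

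First, I would invoke Corollary \ref{cor:homo_delta}: since $f(\bx) = (\bc^\top \bx)^q$ is $q$-homogeneous, vanishes at the origin, and is convex on $\conv(Q_u \cup \{\mathbf{0}\})$ (because $\bc > 0$ and $q > 1$), the hypothesis applies and yields $\Delta(P^0, P) = \tfrac{q-1}{(d+2)(q+d+1)} \int_{Q_u} f(\bx)\,d\bx$. Next, I would apply Theorem \ref{thm:naivevol} with the constant upper bound $\mu \equiv F$, combined with Lemma \ref{lem:naivevol_qhomo}, to obtain $\vol(P^0(f, Q_u, F)) = \tfrac{u^d F}{d+2} - \tfrac{1}{q+d+1} \int_{Q_u} f(\bx)\,d\bx$.

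Forming the ratio and dividing numerator and denominator by $u^d F / [(d+2)(q+d+1)]$ should collapse everything to
\begin{equation*}
\frac{\Delta(P^0, P)}{\vol(P^0(f, Q_u, F))} \;=\; \phi(R(u)), \qquad \phi(t) := \frac{(q-1)\,t}{(q+d+1) - (d+2)\,t}.
\end{equation*}
Using Proposition \ref{prop:power_linear} together with $F = u^q (\bc^\top \mathbf{1} + \bc^\top \bv_0 / u)^q$, I can identify $R(u)$ with the normalized integral $\int_{[0,1]^d}(\bc^\top \by + \bc^\top \bv_0 / u)^q \,d\by / (\bc^\top \mathbf{1} + \bc^\top \bv_0 / u)^q$, which is exactly the quantity Lemma \ref{lem:c} controls. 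That lemma then immediately gives $R(u) \geq L := \tfrac{\Gamma(q+1)\,d!}{\Gamma(q+d+1)}\tfrac{\sum_{j=1}^d j^q}{d^q}$ for every $u > 0$.

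To finish, I would observe that the trivial upper bound $R(u) \leq 1$ (since $F = \max_{Q_u} f$) combined with $q > 1$ forces $(q+d+1) - (d+2) R(u) \geq q - 1 > 0$, so $\phi$ is well-defined on $[0,1]$ and $\phi'(t) = (q-1)(q+d+1)/[(q+d+1) - (d+2)t]^2 > 0$ there. Monotonicity then yields $\phi(R(u)) \geq \phi(L)$ uniformly in $u$, and a routine dominated-convergence argument gives $R(u) \to R_\infty := \int_{[0,1]^d}(\bc^\top \by)^q\,d\by / (\bc^\top \mathbf{1})^q$, so the limit exists and equals $\phi(R_\infty) \geq \phi(L)$, which is exactly the claimed bound. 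The only real technical verification is positivity of the denominator of $\phi$ (so that the monotonicity step is legitimate), and that is minor; the main structural insight is simply recognizing that after exploiting $q$-homogeneity, every $u$-dependence funnels into the single ratio $R(u)$ that Lemma \ref{lem:c} was engineered to control.
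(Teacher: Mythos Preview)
Your proposal is correct and follows essentially the same route as the paper: both compute $\Delta(P^0,P)$ via Corollary~\ref{cor:homo_delta}, compute $\vol(P^0(f,Q_u,F))$ via Theorem~\ref{thm:naivevol} and Lemma~\ref{lem:naivevol_qhomo}, reduce the ratio to a monotone rational function of the normalized integral $\int_{[0,1]^d}(\bc^\top\by+\bc^\top\bv_0/u)^q\,d\by\,/\,(\bc^\top\mathbf{1}+\bc^\top\bv_0/u)^q$, and then invoke Lemma~\ref{lem:c}. Your write-up is in fact slightly more careful than the paper's, since you explicitly verify that the denominator $(q+d+1)-(d+2)R(u)$ stays positive and that the limit in $u$ actually exists (the paper only exhibits the uniform lower bound and leaves both points implicit).
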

\begin{proof}
$f(\bx):=(\bc^\top \bx)^q$ $(\bc> 0, q>1)$  is $q$-homogeneous.
By Theorem \ref{thm:naivevol}, Lemma \ref{lem:naivevol_qhomo}, and Corollary \ref{cor:homo_delta}, we obtain
\begin{align*}
    &\frac{\Delta(P^0,P)}{\vol(P^0(f,Q_u,F))}=\frac{\frac{(q-1)}{(d+2)(q+d+1)}\int_{Q_u} (\bc^\top\bx)^q d\bx}{\frac{1}{d+2}\int_{Q_u} F d\bx-\frac{1}{q+d+1}\int_{Q_u} (\bc^\top \bx)^q d\bx}\\
    &=\frac{\frac{(q-1)u^{q+d}}{(d+2)(q+d+1)}\int_{[0,1]^d} (\bc^\top\bx+ \frac{\bc^\top\bv_0}{u})^q d\bx}{\frac{u^{q+d} (\bc^\top\mathbf{1}+ \frac{\bc^\top\bv_0}{u})^q}{d+2}-\frac{u^{q+d}}{q+d+1}\int_{[0,1]^d} (\bc^\top \bx+ \frac{\bc^\top\bv_0}{u})^q d\bx}\\
    &=\frac{(q-1)\int_{[0,1]^d} (\frac{\bc^\top \bx+ \frac{\bc^\top\bv_0}{u}}{\bc^\top\mathbf{1}+ \frac{\bc^\top\bv_0}{u}})^q d\bx}{(q+d+1)-(d+2)\int_{[0,1]^d} (\frac{\bc^\top \bx+ \frac{\bc^\top\bv_0}{u}}{\bc^\top\mathbf{1}+ \frac{\bc^\top\bv_0}{u}})^q d\bx}.
\end{align*}

Then by Lemma \ref{lem:c}, we have
\begin{align*}
\frac{\Delta(P^0,P)}{\vol(P^0(f,Q_u,F))} & =\frac{(q-1)\int_{[0,1]^d} (\frac{\bc^\top \bx+ \frac{\bc^\top\bv_0}{u}}{\bc^\top\mathbf{1}+ \frac{\bc^\top\bv_0}{u}})^q d\bx}{(q+d+1)-(d+2)\int_{[0,1]^d} (\frac{\bc^\top \bx+ \frac{\bc^\top\bv_0}{u}}{\bc^\top\mathbf{1}+ \frac{\bc^\top\bv_0}{u}})^q d\bx}\\
&\ge \frac{(q-1)\frac{\Gamma(q+1)d!}{\Gamma(q+d+1)}\frac{\sum_{j=1}^{d}j^q}{d^q}}{(q+d+1)-(d+2)\frac{\Gamma(q+1)d!}{\Gamma(q+d+1)}\frac{\sum_{j=1}^{d}j^q}{d^q}}. \tag*{\qed}
\end{align*}
\end{proof}

Next, as a first step to extend the results to a broader family of functions, based on the proofs of Theorem \ref{exprat}, we derive a sufficient condition for a nonnegative convex function $f$ to have an asymptotic cut-off ratio $0$ on the scaled box $Q_u$. We also give some examples besides the exponential function.

\begin{proposition}\label{prop:sufficient_condition}
Suppose that $f(\bx)$ is a nonnegative convex function on $\mathbb{R}^d$ satisfying $f(\mathbf{0})=0$. Suppose also that  $\lim_{u\rightarrow \infty}\frac{\int_{[0,1]^d} f(\bv_0+u\bx) d\bx}{\int_{[0,1]^d} \mu(\bv_0+u\bx) d\bx}=0$, where $\mu$ is a concave upper bound function for $f$ on the scaled box $Q_u:=\bv_0+ u[0,1]^d$. Then
$$
\lim_{u\rightarrow\infty}\frac{\Delta(P^0,P)}{\vol(P^0(f,Q_u,\mu))}=0.
$$
In particular, if $\mu(\bx)=\max_{Q_u}f(\bx)$ is the best constant upper bound, then the sufficient condition is equivalent to $\lim_{u\rightarrow \infty}\int_{[0,1]^d} \frac{f(\bv_0+u\bx)}{\mu(\bv_0+u\bx)} d\bx=0$.
\end{proposition}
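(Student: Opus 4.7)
My plan is to exploit the observation that the formulas from Corollary \ref{cor:diff} and Theorem \ref{thm:naivevol} share the common double-integral term $I := \int_0^1 z^d\int_{Q_u}f(z\bx)\,d\bx\,dz$, so that
\[
\Delta(P^0,P) = \frac{1}{d+2}\int_{Q_u}f(\bx)\,d\bx - I, \qquad \vol(P^0(f,Q_u,\mu)) = \frac{1}{d+2}\int_{Q_u}\mu(\bx)\,d\bx - I.
\]
Thus the ratio is controlled entirely by $I$ together with the two single integrals $\int_{Q_u}f$ and $\int_{Q_u}\mu$, reducing the problem to bounding $I$ suitably from both sides.

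The key estimate is that $0 \le I \le \frac{1}{d+2}\int_{Q_u}f(\bx)\,d\bx$. The lower bound is immediate from $f\ge 0$. For the upper bound I would invoke convexity together with $f(\mathbf{0})=0$: writing $z\bx = z\bx + (1-z)\mathbf{0}$ gives $f(z\bx)\le z f(\bx)$ for $z\in[0,1]$, and then $\int_0^1 z^{d+1}\,dz = \frac{1}{d+2}$ supplies the scalar. From $I\ge 0$ I obtain $\Delta(P^0,P) \le \frac{1}{d+2}\int_{Q_u}f(\bx)\,d\bx$, while $I \le \frac{1}{d+2}\int_{Q_u}f$ gives $\vol(P^0(f,Q_u,\mu))\ge \frac{1}{d+2}\bigl(\int_{Q_u}\mu(\bx)\,d\bx - \int_{Q_u}f(\bx)\,d\bx\bigr)$, leading to
\[
\frac{\Delta(P^0,P)}{\vol(P^0(f,Q_u,\mu))} \;\le\; \frac{\int_{Q_u}f(\bx)\,d\bx}{\int_{Q_u}\mu(\bx)\,d\bx - \int_{Q_u}f(\bx)\,d\bx}.
\]

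Applying the substitution $\bx = \bv_0 + u\by$ cancels the common Jacobian $u^d$, rewriting the right-hand side as $r/(1-r)$ with $r := \int_{[0,1]^d}f(\bv_0+u\by)\,d\by \big/ \int_{[0,1]^d}\mu(\bv_0+u\by)\,d\by$. The hypothesis $r\to 0$ then yields the conclusion. For the \emph{in particular} clause, when $\mu = \max_{Q_u} f$ is the best constant upper bound, $\mu(\bv_0+u\bx) = \mu$ is independent of $\bx$, so the hypothesis ratio simplifies on inspection to $\int_{[0,1]^d} f(\bv_0+u\bx)/\mu(\bv_0+u\bx)\,d\bx$. I do not anticipate a serious obstacle; the only mild subtlety is ensuring the denominator in the above upper bound is positive, but this holds automatically once $u$ is large enough for the hypothesis ratio to fall below $1$.
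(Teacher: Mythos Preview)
Your proof is correct and follows the same overall structure as the paper's: both start from the formulas in Corollary~\ref{cor:diff} and Theorem~\ref{thm:naivevol}, recognize the shared double-integral term $I$, and reduce the cut-off ratio to a comparison of $\int_{Q_u} f$ against $\int_{Q_u}\mu$ after the change of variables $\bx=\bv_0+u\by$.

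There is, however, a small difference in execution worth noting. You bound numerator and denominator separately, using the convexity estimate $f(z\bx)\le z f(\bx)$ to obtain $I\le\frac{1}{d+2}\int_{Q_u}f$ and hence $\vol(P^0)\ge\frac{1}{d+2}\bigl(\int_{Q_u}\mu-\int_{Q_u}f\bigr)$, which yields the bound $r/(1-r)$. The paper instead applies the single algebraic inequality
\[
\frac{A-I}{B-I}\;\le\;\frac{A}{B}\qquad\text{whenever }0\le I,\ A\le B,
\]
with $A=\frac{1}{d+2}\int_{Q_u}f$ and $B=\frac{1}{d+2}\int_{Q_u}\mu$, obtaining directly the sharper bound $r$. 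This avoids invoking convexity at all in the bounding step (nonnegativity of $f$ alone suffices for $I\ge 0$, and $\Delta(P^0,P)\ge 0$ is immediate from $P\subseteq P^0$). Both bounds of course go to zero with $r$, so your argument is complete; the paper's route is just a touch more economical and sidesteps the denominator-positivity check you flagged.
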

\begin{proof}
Because $f$ is nonnegative, we have $\int_0^1 z^d\int_{Q_u} f(z\bx)d\bx dz\ge 0$. Thus
\begin{align*}
    0\le\frac{\Delta(P^0,P)}{\vol(P^0(f,Q_u,\mu))}&= \frac{\frac{1}{d+2}\int_{Q_u} f(\bx) d\bx-\int_0^1 z^d\int_{Q_u} f(z\bx)d\bx dz}{\frac{1}{d+2}\int_{Q_u} \mu(\bx) d\bx-\int_0^1 z^d\int_{Q_u} f(z\bx)d\bx dz}\\
    &\le \frac{\frac{1}{d+2}\int_{Q_u} f(\bx) d\bx}{\frac{1}{d+2}\int_{Q_u} \mu(\bx) d\bx}\\
    &=\frac{\frac{1}{d+2}\int_{[0,1]^d} f(\bv_0+u\bx) d\bx}{\frac{1}{d+2}\int_{[0,1]^d} \mu(\bv_0+u\bx) d\bx}.
\end{align*}
Therefore,
\begin{align*}
\lim_{u\rightarrow \infty}\frac{\Delta(P^0,P)}{\vol(P^0(f,Q_u,\mu))} = 0. \tag*{\qed}
\end{align*}
\end{proof}

\begin{example}
For $f(\bx):=e^{\bc^\top \bx}-1$ with $\bc>0$, and $\mu(\bx):=\max_{Q_u} f(\bx) = e^{u\bc^\top\mathbf{1}+\bc^\top\bv_0}-1$, we have
\begin{align*}
&\lim_{u\rightarrow \infty}\int_{[0,1]^d} \frac{f(\bv_0+u\bx)}{\mu(\bv_0+u\bx)} d\bx
~=~\lim_{u\rightarrow \infty}\int_{[0,1]^d} \frac{e^{u\bc^\top \bx+\bc^\top\bv_0}-1}{e^{u\bc^\top\mathbf{1}+\bc^\top\bv_0}-1}d\bx\\
&\quad =~\lim_{u\rightarrow \infty}\int_{[0,1]^d} e^{-u\bc^\top(\mathbf{1}-\bx)}d\bx~=~0.
\end{align*}
\end{example}
Therefore, the asymptotic cut-off ratio is $0$ by Proposition \ref{prop:sufficient_condition}, which also follows from Theorem \ref{exprat}.

\begin{example}
We can also consider other superpolynomial functions, for example, $f(\bx) :=g(\bc^\top \bx)$, where $\bc>0$ and $g(t)=(t+1)^{\log(t+1)} - 1$, $t\ge0$.
Let $\mu(\bx) := \max_{Q_u}f(\bx)= g(u\bc^\top \mathbf{1}+\bc^\top\bv_0)$.

We can verify that
\begin{align*}
    \lim\limits_{u\rightarrow \infty}\frac{g(u\bc^\top\bx+\bc^\top\bv_0)}{g(u\bc^\top\mathbf{1}+\bc^\top\bv_0)}    &=\lim\limits_{u\rightarrow \infty}e^{(\log(u\bc^\top\bx))^2 - (\log(u\bc^\top\mathbf{1}))^2}\\
&= \lim\limits_{u\rightarrow \infty}e^{2\log (u)(\log(\bc^\top\bx)-\log(\bc^\top\mathbf{1})) + (\log(\bc^\top\bx))^2 - (\log(\bc^\top\mathbf{1}))^2 }\\
&=0, ~\text{for}~\bx\ne \mathbf{1}.
\end{align*}

Therefore, the condition can be verified by
\begin{align*}
    \lim_{u\rightarrow \infty}\int_{[0,1]^d} \frac{f(\bv_0+u\bx)}{\mu(\bv_0+u\bx)} d\bx &=\lim\limits_{u\rightarrow \infty}\int_{[0,1]^d}\frac{g(u\bc^\top\bx+\bc^\top\bv_0)}{g(u\bc^\top\mathbf{1}+\bc^\top\bv_0)} d\bx\\
    &=\int_{[0,1]^d}\lim\limits_{u\rightarrow \infty}\frac{g(u\bc^\top\bx+\bc^\top\bv_0)}{g(u\bc^\top\mathbf{1}+\bc^\top\bv_0)} d\bx\\
    &=0.
\end{align*}

By Proposition \ref{prop:sufficient_condition}, the cut-off ratio tends to 0.
\end{example}

\section{Concluding remarks}\label{sec:conc}
We lifted several results comparing the perspective and na\"{i}ve relaxations in the simplex-domain case \cite{xuleemulti} to the box-domain case.
\begin{itemize}
    \item We extended the computation of the volume of perspective and na\"{i}ve relaxations to general polytopes and connect it to integration. We also provided  results computing the integration of $f(\bx)$ for the zonotope case, which includes the box case.
    \item We compared two natural concave upper bounds, the constant upper bound and the concave-envelope upper  bound, and showed that the cut-off ratio has similar asymptotic behavior for these two bounds.
    \item Using the idea of triangulation, we analyzed the cut-off ratio on a scaled box for two important classes of functions, and showed different asymptotic behavior for the power of linear functions and a class of exponential functions.
    \item We gave a sufficient condition for a function $f(\bx)$ such that the cut-off ratio tends to $0$ as the scaled box becomes larger, which is the asymptotic behavior for a class of exponential functions.
\end{itemize}
For  future work, we believe that some technical improvements can be achieved, for example, the lower bound in Theorem \ref{cx_rat} and Lemma \ref{lem:c}.
It will be very interesting to understand the asymptotic behavior of the cut-off ratio in terms of more general classes of functions and domains. As a first step, we will need to understand the integration over a general polytope and the concave upper bound of a convex function beyond the supermodular function on a box case studied in this paper.
Another interesting direction is to explore the effects of triangulation over a box on these relaxations.

\begin{acknowledgements}
We gratefully acknowledge discussions with Zhongzhu Chen in regard to Lemma \ref{lem:c}.
\end{acknowledgements}

\section*{Declarations}
The authors declare that they have no competing interests.


\bibliographystyle{alpha}
\bibliography{perssimplex}   

\appendix

\section*{Appendix: Triangulation method for the integral of an exponential function on a hypercube}

Let $\tilde{\bc}^\top:=\bc^\top\bA$. In this appendix, we verify that the result obtained from the triangulation method to compute $\int_{[0,1]^n} e^{\tilde{\bc}^\top\by} d\by$ is the same as $\prod_{j=1}^n \frac{e^{\tilde{c}_j}-1}{\tilde{c}_j}$ in Proposition \ref{prop:pers_lower_bound_int} under a simple genericity assumption: for any nonempty subset $S$ of $[n]$, $\sum_{j\in S}\tilde{c}_j\ne 0$.
This assumption ensures that we can use the short formulae of Brion to compute over each simplex. With Kuhn's triangulation, we obtain
\begin{align*}
    \int_{[0,1]^n} e^{\tilde{\bc}^\top\by} d\by &= \sum_{(i_1,\dots,i_n)\in\Omega}\int_{\Delta_{i_1,\dots,i_n}} e^{\tilde{\bc}^\top\by} d\by\\
    &=\sum_{(i_1,\dots,i_n)\in\Omega}\sum_{j=0}^n \frac{e^{\tilde{\bc}_{(i_1,\dots,i_n)}^\top\bw_j}}{\prod_{k\ne j}\left(\tilde{\bc}_{(i_1,\dots,i_n)}^\top(\bw_j-\bw_k)\right)},
\end{align*}
where $\Delta_{i_1,\dots,i_n}:=\{\bx: 0\le x_{i_1}\le\dots \le x_{i_n}\le 1\}$, $\tilde{\bc}_{(i_1,\dots,i_n)}:=(\tilde{c}_{i_1},\dots,\tilde{c}_{i_n})$, $\bw_0:=\mathbf{0}$, $\bw_j:=\sum_{\ell=n+1-j}^n \be_\ell$, and $\Omega$ is the set of all permutations of $[n]$.

We are going to verify that
$$
\sum_{(i_1,\dots,i_n)\in\Omega}\sum_{j=0}^n \frac{e^{\tilde{\bc}_{(i_1,\dots,i_n)}^\top\bw_j}}{\prod_{k\ne j}\left(\tilde{\bc}_{(i_1,\dots,i_n)}^\top(\bw_j-\bw_k)\right)} = \prod_{j=1}^n \frac{e^{\tilde{c}_j}-1}{\tilde{c}_j},
$$
by comparing the coefficient of the term $e^{\tilde{c}_{j_1}+\tilde{c}_{j_2}+\dots+\tilde{c}_{j_k}}$ on both sides.
Because
$$
\prod_{j=1}^n \frac{e^{\tilde{c}_j}-1}{\tilde{c}_j} = \frac{1}{\prod_{j=1}^n\tilde{c}_j}\sum_{k=0}^n (-1)^{n-k}\sum_{1\le j_1<j_2<\dots<j_k\le n}e^{\tilde{c}_{j_1}+\tilde{c}_{j_2}+\dots+\tilde{c}_{j_k}},
$$
we know the coefficient on the right-hand side is $\frac{(-1)^{n-k}}{\prod_{j=1}^n\tilde{c}_j}$.

Before computing the coefficient on the left-hand side, we first prove that for $\ell := |T|$,
\begin{equation}\label{eqn:claim_1}
\sum_{(i_1,\dots,i_{\ell})\in\Omega(T)}\frac{1}{\tilde{c}_{i_{1}}(\tilde{c}_{i_{1}}+\tilde{c}_{i_{2}})\dots(\tilde{c}_{i_{1}}+\dots+\tilde{c}_{i_{\ell}})}=\frac{1}{\prod_{j\in T}\tilde{c}_j},
\end{equation}
by induction on $\ell$. The result is trivial when $\ell=1$. Suppose the result holds for $\ell-1\ge 1$, then we can compute by the inductive hypothesis for each fixed $i_{\ell}$\thinspace,
\begin{align*}
&\sum_{(i_1,\dots,i_{\ell})\in\Omega(T)}\frac{1}{\tilde{c}_{i_{1}}(\tilde{c}_{i_{1}}+\tilde{c}_{i_{2}})\dots(\tilde{c}_{i_{1}}+\dots+\tilde{c}_{i_{\ell}})}\\
&\quad = \sum_{j\in T} \frac{1}{\sum_{s\in T}\tilde{c}_s}\sum_{(i_1,\dots,i_{\ell})\in\Omega(T),i_{\ell}=j}\frac{1}{\tilde{c}_{i_{1}}(\tilde{c}_{i_{1}}+\tilde{c}_{i_{2}})\dots(\tilde{c}_{i_{1}}+\dots+\tilde{c}_{i_{\ell-1}})}\\
&\quad = \sum_{j\in T} \frac{1}{\sum_{s\in T}\tilde{c}_s} \frac{1}{\prod_{s\in T\setminus\{j\}}\tilde{c}_s}~\quad(\text{by the inductive hypothesis})\\
&\quad = \frac{1}{\prod_{s\in T}\tilde{c}_s}.
\end{align*}
Thus, \eqref{eqn:claim_1} holds.

Let $S:=\{j_1,j_2,\dots,j_k\}$, $S^c:=[n]\setminus S$. The coefficient of the term $e^{\tilde{c}_{j_1}+\tilde{c}_{j_2}+\dots+\tilde{c}_{j_k}}$ on the left-hand side is
\begin{align*}
 &\sum_{(i_1,\dots,i_n)\in\Omega, (i_1,\dots,i_{n-k})\in S^c}\frac{1}{\prod_{k\ne j}(\tilde{\bc}_{(i_1,\dots,i_n)}^\top(\bw_j-\bw_k))}\\
=&\sum_{(i_1,\dots,i_{n-k})\in\Omega(S_c)}\frac{1}{-\tilde{c}_{i_{n-k}}(-\tilde{c}_{i_{n-k-1}}-\tilde{c}_{i_{n-k}})\dots(-\tilde{c}_{i_{1}}-\dots-\tilde{c}_{i_{n-k}})}\\
&\cdot \sum_{(i_{n-k+1,\dots,i_{n}})\in\Omega(S)}\frac{1}{\tilde{c}_{i_{n-k+1}}(\tilde{c}_{i_{n-k+1}}+\tilde{c}_{i_{n-k+2}})\dots(\tilde{c}_{i_{n-k+1}}+\dots+\tilde{c}_{i_{n}})}\\
=&\frac{1}{(-1)^{n-k}\prod_{j\in S_c}\tilde{c}_j} \cdot \frac{1}{\prod_{j\in S}\tilde{c}_j}\\
=&\frac{(-1)^{n-k}}{\prod_{j\in[n]}\tilde{c}_j},
\end{align*}
where $\Omega(T)$ is the set of all permutations of $T$, and the penultimate equation follows from \eqref{eqn:claim_1}. Therefore, we have demonstrated that the results from both methods are the same.
\end{document}